\newtheorem{theorem}{Theorem}
\newtheorem{lemma}[theorem]{Lemma}
\newtheorem{corollary}[theorem]{Corollary}
\newtheorem{proposition}[theorem]{Proposition}
\newtheorem{definition}[theorem]{Definition}
\newtheorem{remark}[theorem]{Remark}
\newenvironment{proof}{{\par\addvspace{0.1cm}\noindent \bf Proof. }}{\hfill$\Box$\par\medskip} 
\def\R{\Re\mathfrak{e} \,}
\def\I{\Im\mathfrak{m} \,}
\title{\bf Equisectional equivalence of triangles}
\author{Jun O'Hara}
\begin{document}

\maketitle

\begin{abstract} 
We study equivalence relation of the set of triangles generated by similarity and operation on a triangle to get a new one by joining division points of three edges with the same ratio. 
Using the moduli space of similarity classes of triangles introduced by Nakamura and Oguiso, we give characterization of equivalent triangles in terms of circles of Apollonius (or hyperbolic pencil of circles) and properties of special equivalent triangles. We also study rationality problem and constructibility problem. 
\end{abstract}

\medskip
{\small {\it Key words and phrases}. Triangle, moduli space, circle of Apollonius, hyperbolic pencil of circles. 
}

{\small 2010 {\it Mathematics Subject Classification}: 51M04.}

\section{Introduction}
We study elementary geometric operations on triangles defined as follows. 
Let $\triangle ABC$ be a triangle, and $q$ be a real number. 
Let $A',B'$, and $C'$ be division points of the edges $BC,CA$, and $AB$ by $1-q:q$ respectively, namely, \setlength\arraycolsep{1pt}
\[
A'=qB+(1-q)C,\>\>\>
B'=qC+(1-q)A,\>\>\>
C'=qA+(1-q)B.
\] 
Let $A''$ ($B''$ or $C''$) be the intersection of the lines $AA'$ and $BB'$ ($BB'$ and $CC'$ or $CC'$ and $AA'$ respectively). 
Define {\em equisection operators} 
$T_q$ and $S_q$, where $S_q$ can be defined when $q\ne1/2$, by 
\[T_q(\triangle ABC\,)=\triangle A'B'C' \>\>\mbox{ and }\>\> S_q(\triangle ABC\,)=\triangle A''B''C''\,.\]
The operators $T_q$ have been studied in articles such as \cite{CD, D, NO, S}, {\em et. al.}

\begin{figure}[htbp]
\begin{center}
\begin{minipage}{.45\linewidth}
\begin{center}
\includegraphics[width=0.8\linewidth]{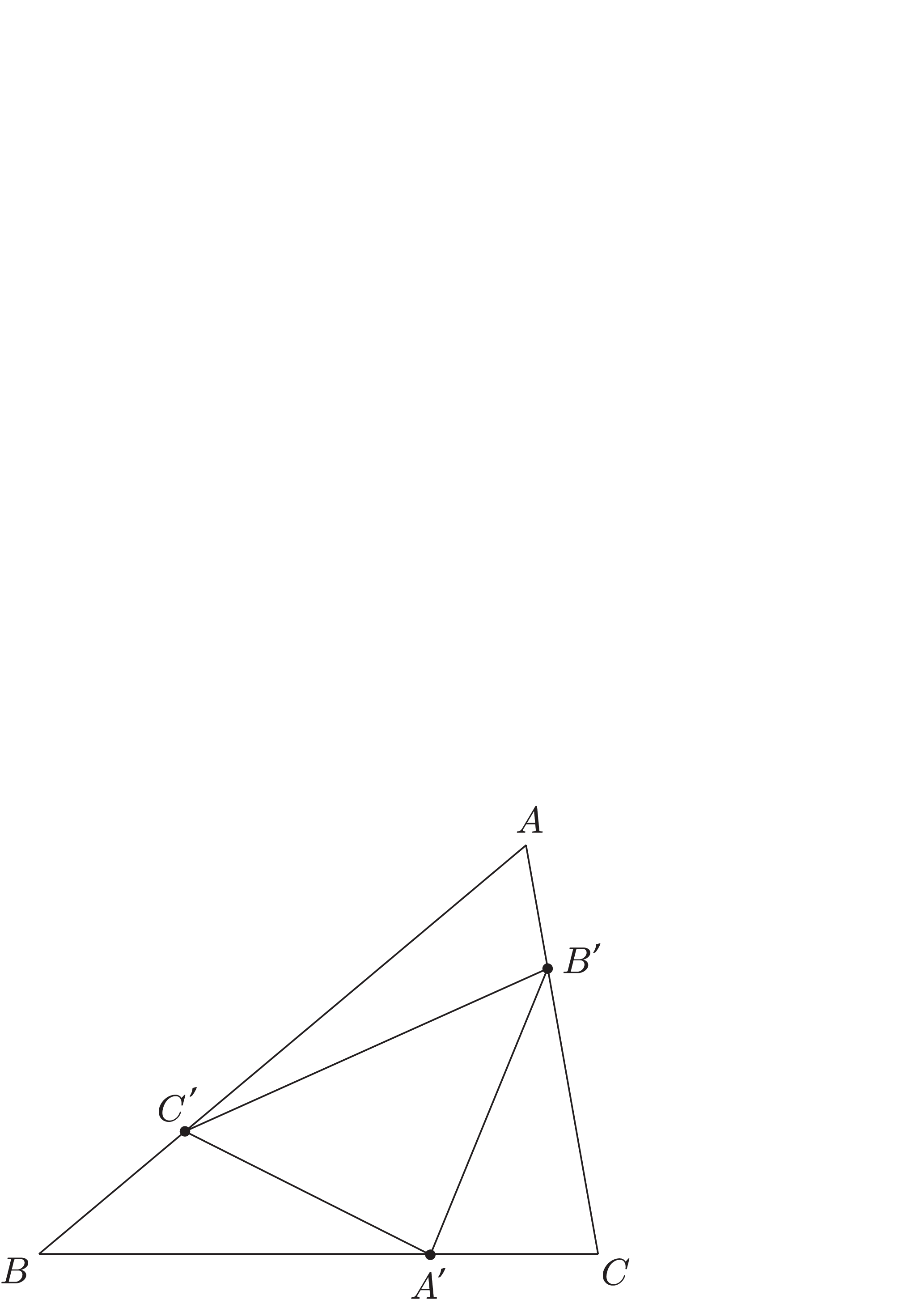}
\caption{Operator $T_q$}
\label{fig_T_q}
\end{center}
\end{minipage}
\hskip 0.4cm
\begin{minipage}{.45\linewidth}
\begin{center}
\includegraphics[width=0.8\linewidth]{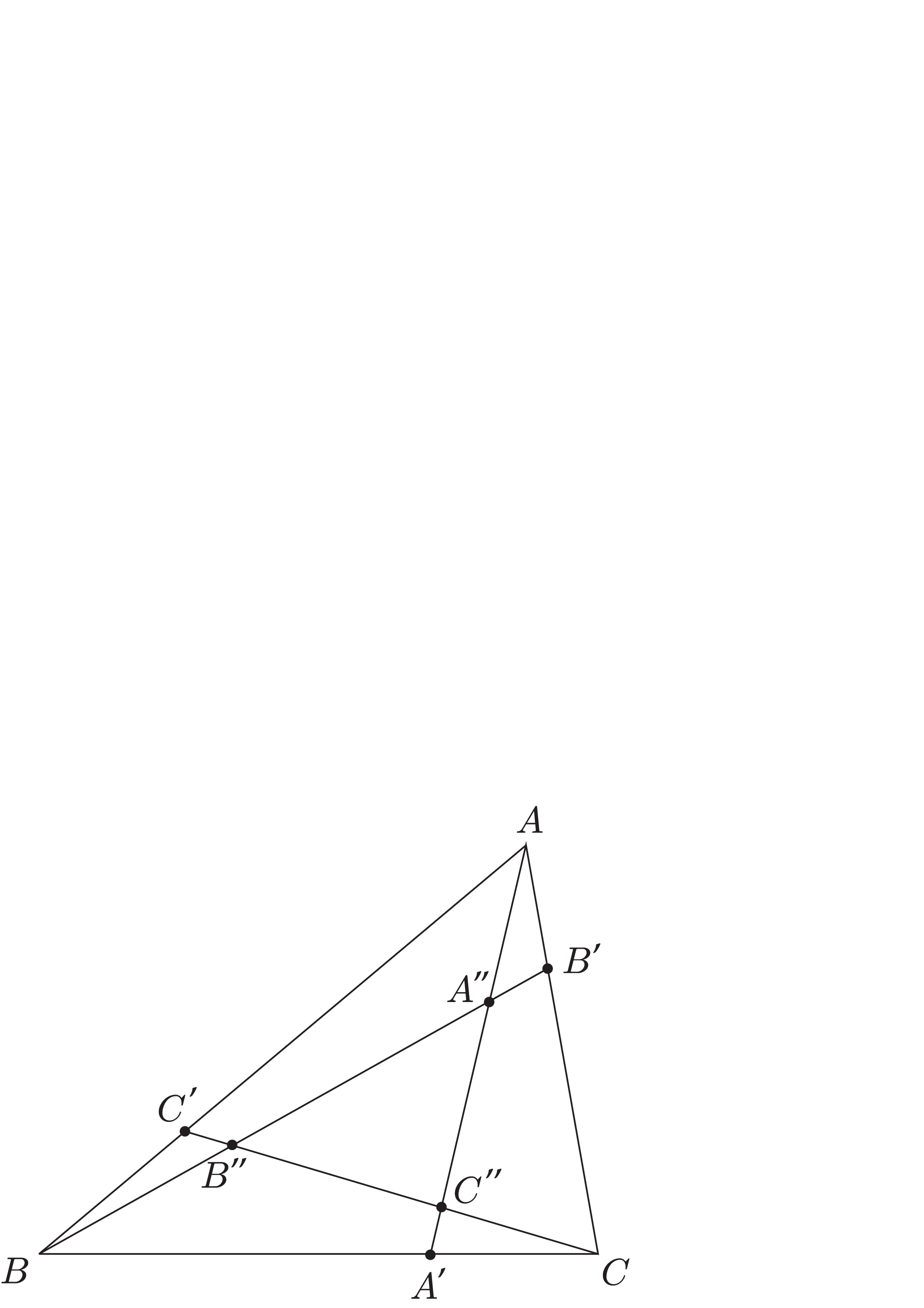}
\caption{Operator $S_q$}
\label{fig_S_q}
\end{center}
\end{minipage}
\end{center}
\end{figure}

In this note we study the equivalence relation (denoted by $\sim$) of the set of triangles (denoted by $\mathcal{T}$) generated by similarity and $\{T_q\}_{q\in \mathbb{R}}$, which we shall call {\em equisectional equivalence}. 
The equivalence relation generated by similarity and $\{T_q\}_{q\in \mathbb{Q}}$ shall be called {\em rational equisectional equivalence} and denoted by $\sim_\mathbb{Q}$. 
We say two triangles $\Delta$ and $\Delta'$ are {\em equisectionally equivalent} (or {\em rational equisectionally equivalent}) if $\Delta \sim \Delta'$ (or $\Delta \sim_\mathbb{Q} \Delta'$ respectively). 
We remark that we use the term  ``similarity'' as the equivalence under orientatipon preserving homothetic transformation in this article. We say two triangles are reversely similar if they are equivalent under orientation reversing homothetic transformation. 

Nakamura and Oguiso introduced the moduli space of similarity classes of triangles in \cite{NO}, which is a strong tool for the study of $T_q$ and $S_q$. 
Using their results (explained in Section \ref{section_NO}), we give (projective) geometric characterization of equisectionally equivalent triangles. 
Namely, two triangles with a common base, say $BC$, with the third vertices, say $A$ and $\widehat A'$, in the same side of the base are equisectionally equivalent if and only if $A$ and $\widehat A'$ 
are on the same circle of Apollonius with foci being two vertices (denoted by $D$ and $D'$) of regular triangles with the common base $BC$. 
Therefore, each equisectional equivalence class with a given base $BC$ corresponds to a circle of Apollonius with foci $D$ and $D'$. It is an element of a hyperbolic pencil of circles defined by $D$ and $D'$ from a projective geometric viewpoint. 

We then study properties of triangles of the following three special types, right triangles, isosceles triangles, and trianges with sides in arithmetic progression (which shall be denoted by {\em SAP} triangles), that appear in the same equisectional equivalence class. There are (at most) two similarity classes of such triangles for each type, which are reversely similar in the case of right or SAP triangles, or the base angles of which satisfy $\tan\theta\cdot\tan\theta'=3$ in the case of isosceles triangles. For each type we explicitly give the ratio $q$ such that $T_q$ maps one to the other in the same equisectional equivalence class, which implies that a pair of triangles $\Delta$ and $\Delta'$ of one of the above special types with rational edges satisfies $\Delta\sim\Delta'$ if and only if $\Delta\sim_\mathbb{Q}\Delta'$. 

We finally study compass and straightedge constructibility of $q$ for a given pair of triangles. 

\section{The statement of the main results}

\begin{definition}\label{def_alpha} \rm 
Let $\Delta=\triangle ABC$ be a triangle. 
Let $\Pi_A$ be a half plane containing $A$ with boundary the line $\overline{BC}$, and 
$D$ and $\overline D$ be two points ($D\in\Pi_A$) such that $\triangle DBC$ and $\triangle \overline DBC$ are regular triangles. 
Define $\alpha(\Delta)$ ($0\le\alpha(\Delta)<1$) and $\beta(\Delta)$ by 
\[
\alpha(\Delta)=\frac{|AD|}{\big|A\overline{D}\big|}, \quad 
\beta(\Delta)=\frac{|\triangle ABC|}{\big|A\overline{D}\big|^2}\,,
\]
where $|\triangle ABC|$ means the area of $\triangle ABC$.
\end{definition}

We remark that both $\alpha(\Delta)$ and $\beta(\Delta)$ are independent of the choice of the base of the triangle. 
A locus of points $X$ such that $|XD|/\big|X\overline{D}\big|$ is a given positive constant 
is a circle, called a {\em circle of Apollonius with foci $D$ and $\overline D$}. 
Put 
\[
C_A=\{X\,:\,|XD|/\big|X\overline{D}\big|=\alpha(\Delta)\}. 
\]
Note that $C_A=\{A\}$ when $\Delta$ is a regular triangle. 
The quantity $\alpha$ takes the value $0$ if and only if $\Delta$ is a regular triangle, and approaches $1$ as $\Delta$ becomes thinner and thinner. In that sense, it can be considered as measuring how far a triangle is from a regular triangle. 

\begin{theorem}\label{main_theorem} 
Given two triangles $\Delta=\triangle ABC$ and $\Delta'=\triangle A'BC$. 
Let $\widehat A'$ be a point in $\Pi_A$ such that $\triangle \widehat A'BC$ is similar to $\triangle A'B'C'$. 
Then the following conditions are equivalent:
\begin{enumerate}
\item $\Delta$ is equisectionally equivalent to $\Delta'\,$.
\item $\alpha(\Delta)=\alpha(\Delta')$, in other words, $\widehat A'$ is on the circle of Apollonius with foci $D$ and $\overline{D}$ that passes through $A$. 
\item $\beta(\Delta)=\beta(\Delta')$. 
\item Let $A_2$ and $A_3$ be points in $\Pi_A$ such that $\triangle BCA_2$ and $\triangle CA_3B$ are similar to $\triangle ABC$ 
in such a way that each vertex of $\triangle BCA_2$ or $\triangle CA_3B$ corresponds to a vertex of $\triangle ABC$ in the same sequential order through the similarity (Figure \ref{fig_Ap_circ_three_pts}). 
Then $\widehat A'$ is on the circle that passes through $A, A_2$, and $A_3$. 
When $\Delta$ is a regular triangle we agree that the circle through $A, A_2$, and $A_3$ consists of a single point. 
\end{enumerate}
\end{theorem}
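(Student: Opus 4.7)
The strategy is to pass to the Nakamura–Oguiso moduli space of similarity classes of triangles with fixed base $BC$: placing $B=0$, $C=1$ in $\mathbb{C}$, the apex $A$ varies in the upper half plane, and the equilateral apices $D,\overline D$ correspond to $(1\pm i\sqrt 3)/2$. The central observation I would establish is that the equisection operator $T_q$ descends to this moduli space as a Möbius transformation whose fixed points are exactly $D$ and $\overline D$; once this is in place, all four equivalences follow from standard facts about elliptic Möbius transformations and the hyperbolic pencil of Apollonius circles with foci $D,\overline D$.

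With $A$ at $z$, the new vertices are $A'=1-q$, $B'=q+(1-q)z$, $C'=qz$, and normalising the similarity class of $T_q(\Delta)$ via the cross ratio $(B'-C')/(A'-C')$ yields
\[
f_q(z)\;=\;\frac{(1-2q)z+q}{-qz+(1-q)},
\]
whose fixed points solve $z^2-z+1=0$, i.e.\ are exactly $D$ and $\overline D$. A short calculation (using $|q-(1-q)D|^2 = 3q^2-3q+1 = |q-(1-q)\overline D|^2$) shows that the multiplier $k(q)$ defined by $(f_q(z)-D)/(f_q(z)-\overline D)=k(q)\cdot(z-D)/(z-\overline D)$ satisfies $|k(q)|=1$, so each $T_q$ is an elliptic Möbius transformation preserving every Apollonius circle with foci $D,\overline D$; this gives $(1)\Rightarrow(2)$. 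For the converse, the map $q\mapsto\arg k(q)$ is continuous and non-constant, so its image in $\mathbb R/2\pi\mathbb Z$ is a connected set of positive diameter; the subgroup of rotations it generates is therefore all of $\mathbb R/2\pi\mathbb Z$, and consequently $\{T_q\}$ acts transitively on each Apollonius circle, giving $(2)\Rightarrow(1)$.

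The remaining equivalences are algebraic once $(1)\Leftrightarrow(2)$ is known. For $(2)\Leftrightarrow(3)$, writing $z=x+iy$ yields
\[
\alpha(\Delta)^2=\frac{(x-\tfrac12)^2+(y-\tfrac{\sqrt 3}{2})^2}{(x-\tfrac12)^2+(y+\tfrac{\sqrt 3}{2})^2},\qquad \beta(\Delta)=\frac{y/2}{(x-\tfrac12)^2+(y+\tfrac{\sqrt 3}{2})^2},
\]
from which one checks that $\beta$ is a monotone function of $\alpha$, so their level sets coincide. For $(2)\Leftrightarrow(4)$, the auxiliary points $A_2,A_3$ are the images of $A$ under the Möbius transformations of the moduli space induced by the cyclic vertex permutations $B\to C\to A\to B$ and its inverse; since the equilateral similarity class is cyclically symmetric, these transformations also fix $D$ and $\overline D$, hence preserve $C_A$, so $A,A_2,A_3\in C_A$. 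Away from the regular case three distinct points determine the circle, which identifies the circle through $A,A_2,A_3$ with $C_A$. The main obstacle I anticipate is the transitivity argument in $(2)\Rightarrow(1)$ — verifying that $q\mapsto\arg k(q)$ is genuinely non-constant and that the rotations it generates fill $\mathbb R/2\pi\mathbb Z$ rather than a dense proper subgroup — the rest being book-keeping once the Möbius form of $T_q$ is available.
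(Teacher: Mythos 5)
Your proposal is correct in substance and reaches all four equivalences, but it gets there by a genuinely different (more self-contained) route than the paper. The paper simply quotes Nakamura--Oguiso's theorem that $T_q$ acts on the disc $\mathcal{D}=\varphi(\mathcal{S})$ as the rotation by the explicit angle $6\arg\left(-1+(1-2q)\sqrt3\,i\right)$, so that equisectional classes are the circles $|w|=\mathrm{const}$, and then pulls these back through the explicit formula $\varphi([\triangle z01])=\left((z-\rho)/(z-\rho^{-1})\right)^3$ to read off the Apollonius condition $|z-\rho|/|z-\rho^{-1}|=|z'-\rho|/|z'-\rho^{-1}|$; (4) follows because $A_2,A_3$ are similar repositionings of $A$, and (3) from the identity $\beta=(1-\alpha^2)/(4\sqrt3)$. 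You instead re-derive the needed part of the cited theorem by computing the induced M\"obius map $f_q$ on the half-plane and checking it is elliptic with fixed points $D,\overline D$ — essentially the ``projective geometric explanation'' the paper appends after its proof. What your route buys is independence from the citation; what it costs is the explicit rotation angle, so you only conclude that the \emph{group generated by} $\{T_q\}$ acts transitively on each Apollonius circle, whereas the paper's angle formula (the argument of $-1+(1-2q)\sqrt3\,i$ sweeps an interval of length $\pi$, hence $6\arg$ covers all of $\mathbb{R}/2\pi\mathbb{Z}$) shows a single $T_q$ already suffices; for the generated equivalence relation in condition (1) your weaker statement is enough. Three small points to tighten: the non-constancy of $q\mapsto\arg k(q)$, which you flag as an obstacle, is immediate since $k(0)=1$ while $T_{1/3}$ is a nontrivial involution on similarity classes (so $k(1/3)\ne1$); your normalization $(B'-C')/(A'-C')$ differs from the paper's labeling by a cyclic vertex permutation, which is itself an elliptic M\"obius map fixing $D$ and $\overline D$ and hence harmless, but this deserves a sentence; and the ``monotone function'' relating $\beta$ to $\alpha$ should be written out — it is exactly $\beta=(1-\alpha^2)/(4\sqrt3)$, injective since $0\le\alpha<1$.
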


\begin{figure}[htbp]
\begin{center}
\begin{minipage}{.4\linewidth}
\begin{center}
\includegraphics[width=0.9\linewidth]{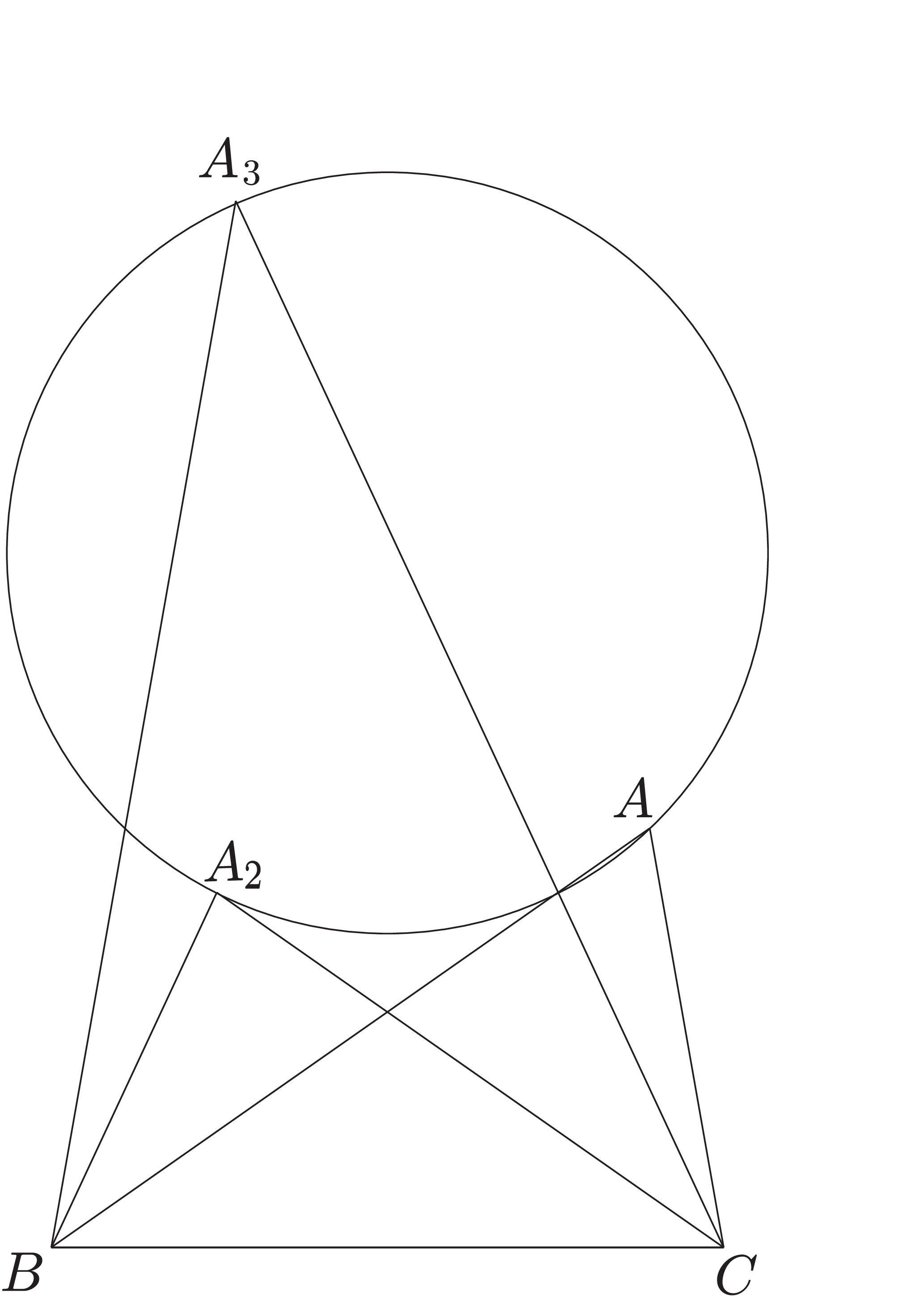}
\caption{Three positions of similar triangles with a common base in the same half-plane.}
\label{fig_Ap_circ_three_pts}
\end{center}
\end{minipage}
\hskip 0.8cm
\begin{minipage}{.5\linewidth}
\begin{center}
\includegraphics[width=0.9\linewidth]{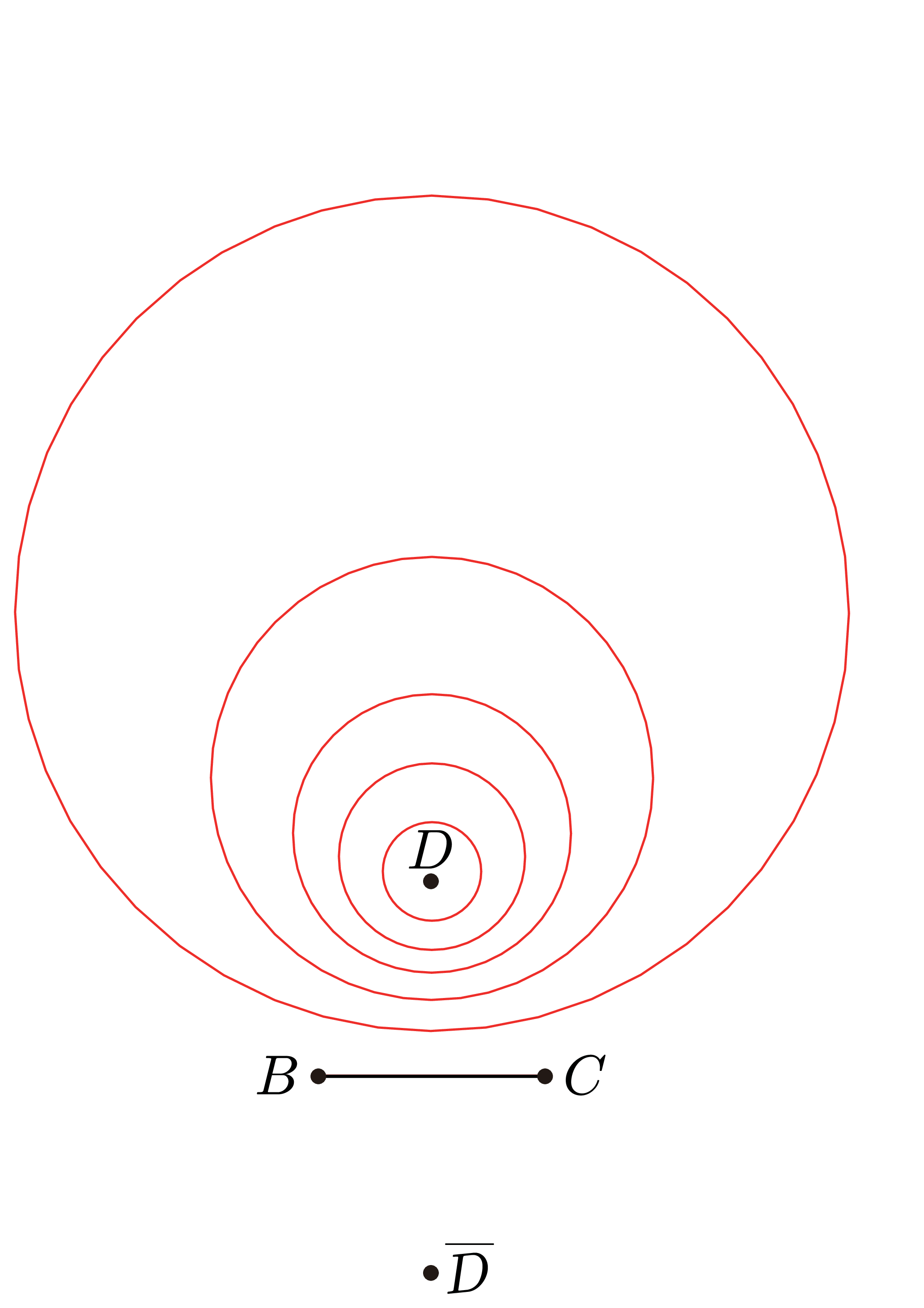}
\caption{A hyperbolic pencil consisting of circles of Apollonius with foci $D$ and $\overline{D}$}
\label{fig_Apollonius_circles}
\end{center}
\end{minipage}
\end{center}
\end{figure}

The set of circles of Apollonius with foci $D$ and $\overline{D}$ is called a {\em hyperbolic pencil} of circles defined by $D$ and $\overline D$ (or a {\em Poncelet pencil} with {\em limit points} (or {\em Poncelet points}) $D$ and $\overline D$). It consists of circles that are orthogonal to any circle passing through $D$ and $\overline D$ (Figure \ref{pencil} left). A set of circles through $D$ and $\overline D$ is called an {\em elliptic pencil} (or a {\em pencil of circles with base points}).

\smallskip
Let $\mathcal{S}$ be the set of similarity classes of triangles and $[\Delta]$ denote the similarity class of a triangle $\Delta$. 
Nakamura and Oguiso's result implies that the sets of similarity classes of equisectionally equivalent triangles form a codimension $1$ foliation of $\mathcal{S}$ with a unique singularity $0$ that corresponds to regular triangles. We study the intersection of each leaf and another codimension $1$ subspace of $\mathcal{S}$ which is the set of similarity classes of one of the following three special triangles, isosceles triangles, right triangles, and SAP triangles (i.e., triangles with sides in arithmetic progression) (the reader is referred to \cite{MOS} for the properties of SAP triangles). 

\begin{corollary}\label{cor_right}
Two right triangles are equisectionally equivalent if and only if they are either similar or reversely similar. 
Any triangle $\Delta$ is equisectionally equivalent to a right triangle if and only if $\alpha(\Delta)\ge(\sqrt3-1)^2/2$. 
\end{corollary}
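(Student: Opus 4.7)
The plan is to use Theorem~\ref{main_theorem} to translate ``equisectionally equivalent'' into the numerical condition $\alpha(\Delta)=\alpha(\Delta')$, and then to analyse $\alpha$ explicitly on the one-parameter family of similarity classes of right triangles.

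First I would parametrize a right triangle, up to similarity, by the leg ratio $t=b/a\in(0,\infty)$, placing $C$ at the right angle, $B=(a,0)$, $A=(0,b)$, and using $BC$ as the base (permissible since $\alpha$ is base-independent). Then $D=(a/2,\,a\sqrt{3}/2)$ and $\overline D=(a/2,\,-a\sqrt{3}/2)$, and a direct distance computation gives
\[
\alpha^{2}=\frac{|AD|^{2}}{\bigl|A\overline D\bigr|^{2}}=\frac{a^{2}+b^{2}-\sqrt{3}\,ab}{a^{2}+b^{2}+\sqrt{3}\,ab}=\frac{1+t^{2}-\sqrt{3}\,t}{1+t^{2}+\sqrt{3}\,t}=:f(t).
\]
Two observations are then immediate: (i) $f(t)=f(1/t)$, and the involution $t\leftrightarrow 1/t$ is exactly the passage from a right triangle to its reversely similar partner (swap of legs); (ii) a one-line calculation gives $f'(t)=2\sqrt{3}(t^{2}-1)/(1+t^{2}+\sqrt{3}\,t)^{2}$, so $f$ is strictly decreasing on $(0,1]$ and strictly increasing on $[1,\infty)$, tends to $1$ at both endpoints, and attains its minimum $f(1)=(2-\sqrt{3})/(2+\sqrt{3})=(2-\sqrt{3})^{2}$ at $t=1$ (the isosceles right triangle). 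Since $2-\sqrt{3}=(\sqrt{3}-1)^{2}/2$, the image of $\alpha$ on right triangles is the half-open interval $\bigl[(\sqrt{3}-1)^{2}/2,\,1\bigr)$, and each value strictly above the minimum is attained at exactly the two leg ratios $t$ and $1/t$.

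Combining these with Theorem~\ref{main_theorem}: two right triangles are equisectionally equivalent iff $f(t)=f(t')$, iff $t'\in\{t,1/t\}$, iff they are similar or reversely similar; this is the first assertion. For the second, since $\alpha(\Delta)<1$ for every triangle, the condition $\alpha(\Delta)\ge(\sqrt{3}-1)^{2}/2$ is exactly the condition that $\alpha(\Delta)$ lie in the image of $\alpha$ on right triangles, which by Theorem~\ref{main_theorem} is the condition that $\Delta$ be equisectionally equivalent to some right triangle. There is no real obstacle here: the only substantive points are the algebraic sign analysis of $f'$ and the numerical identification $2-\sqrt{3}=(\sqrt{3}-1)^{2}/2$, both of which are elementary.
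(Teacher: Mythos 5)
Your proof is correct: the formula $\alpha^{2}=(1+t^{2}-\sqrt{3}\,t)/(1+t^{2}+\sqrt{3}\,t)$ checks out, as do the derivative computation and the identity $2-\sqrt{3}=(\sqrt{3}-1)^{2}/2$, and invoking Theorem~\ref{main_theorem}(2) for triangles without a common base is legitimate since $\alpha$ is a similarity invariant. Your route differs from the paper's in execution, though not in underlying structure. The paper stays inside the Nakamura--Oguiso picture: in the fundamental domain $\Omega$ the right triangles are exactly the points of the Thales semicircle $|z-1/2|=1/2$, and since this circle and every circle of Apollonius with foci $\rho$ and $\rho^{-1}$ are symmetric about the line $\R z=1/2$ (all their centers lie on it), they meet in at most two points interchanged by the reflection $z\mapsto 1-\bar z$, i.e.\ by reverse similarity; the extremal Apollonius circle is the one through the apex of the right isosceles triangle. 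You replace this symmetry argument by an explicit computation of $\alpha$ as a function of the leg ratio $t$, with the reversal symmetry appearing as $t\mapsto 1/t$. What your version buys is that the threshold $2-\sqrt{3}$ and the surjectivity of $\alpha$ onto $\bigl[2-\sqrt{3},1\bigr)$ --- which is what the ``if'' direction of the second assertion really needs, and which the paper compresses into the single sentence about the extremal value --- are verified explicitly; what the paper's argument buys is brevity and uniformity with its treatment of isosceles and SAP triangles, all handled by intersecting a curve in $\Omega$ with the Apollonius pencil.
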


\begin{corollary}\label{cor_SAP}
Two SAP triangles are equisectionally equivalent if and only if they are either similar or reversely similar. 
Any trianle is equisectionally equivalent to such a triangle. 
\end{corollary}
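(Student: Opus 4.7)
\textbf{Proof plan for Corollary \ref{cor_SAP}.} The plan is to reduce everything to Theorem \ref{main_theorem} by parametrising similarity classes of SAP triangles by a single real parameter and computing the invariant $\alpha$ explicitly on this family.

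First I would fix a convenient one-parameter model. Up to similarity, every SAP triangle is represented by one whose middle-length side is $BC$ with $B=(-1,0)$, $C=(1,0)$ and $|AB|=2-d$, $|AC|=2+d$ for some $d\in(-1,1)$, the equilateral case being $d=0$. Solving the two distance equations puts
\[
A=\bigl(-2d,\,\sqrt{3(1-d^2)}\bigr).
\]
Reflection across the perpendicular bisector of $BC$ sends the triangle with parameter $d$ to the one with parameter $-d$, so these two similarity classes are reversely similar (and coincide precisely when $d=0$).

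Next I would compute $\alpha$ from Definition \ref{def_alpha}, taking $D=(0,\sqrt3)$ and $\overline D=(0,-\sqrt3)$. A direct calculation gives
\[
|AD|^2=d^2+6-6\sqrt{1-d^2}, \qquad \big|A\overline D\big|^2=d^2+6+6\sqrt{1-d^2},
\]
and after the substitution $u=\sqrt{1-d^2}\in(0,1]$ one obtains the factored form
\[
\alpha(\Delta)^2=\frac{(1-u)(7+u)}{(7-u)(1+u)}.
\]
I would then check monotonicity: the numerator has derivative $-6-2u<0$ and the denominator has derivative $6-2u>0$ on $u\in(0,1]$, so the right-hand side is a strictly decreasing bijection from $u\in(0,1]$ onto $[0,1)$. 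Equivalently, $\alpha(\Delta)$ depends only on $|d|$ and is a strictly increasing bijection from $|d|\in[0,1)$ onto $[0,1)$.

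The corollary then follows at once from Theorem \ref{main_theorem}: for two SAP triangles with parameters $d,d'$, equisectional equivalence is the same as $\alpha(\Delta)=\alpha(\Delta')$, hence $|d|=|d'|$, i.e.\ $d'=\pm d$, which is exactly similarity or reverse similarity. For the second assertion, any triangle $\Delta_0$ has $\alpha(\Delta_0)\in[0,1)$, and by the surjectivity established above there is a SAP triangle with the same value of $\alpha$, hence equisectionally equivalent to $\Delta_0$. The only nontrivial step is the monotonicity of the rational function above, and the factored form makes it immediate; everything else is a direct appeal to Theorem \ref{main_theorem}.
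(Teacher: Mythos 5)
Your proof is correct, but it is not the route the paper's main argument takes. The paper works in the fundamental domain $\Omega$ of the Nakamura--Oguiso moduli space, normalizing the \emph{longest} side to $[0,1]$ so that the SAP classes form two mirror-image curves $\Gamma_\pm=\{P_\pm(d)\}_{0<d<1/3}$, and then proves that each $\Gamma_\pm$ meets every circle of Apollonius $C_A$ exactly once by a qualitative intersection argument: existence because $\Gamma_+$ joins $\rho$ to a point of the real axis and so must cross $C_A$, uniqueness because $dy/dx<0$ along $\Gamma_+$ while $C_A\cap\Omega_-$ is the graph of an increasing function. You instead normalize the \emph{middle} side, place $B=(-1,0)$, $C=(1,0)$, $A=(-2d,\sqrt{3(1-d^2)})$, and compute the invariant $\alpha$ of Definition \ref{def_alpha} in closed form, obtaining with $u=\sqrt{1-d^2}$ the factorization
\[
\alpha(\Delta)^2=\frac{(1-u)(7+u)}{(7-u)(1+u)},
\]
whose strict monotonicity and surjectivity onto $[0,1)$ give both assertions at once via the equivalence $(1)\Leftrightarrow(2)$ of Theorem \ref{main_theorem} (your formulas check out: $|AD|^2=d^2+6-6u=(1-u)(7+u)$ and $\big|A\overline D\big|^2=(7-u)(1+u)$). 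This is essentially the alternative the paper only mentions in its closing remark --- that one could instead show $|\varphi(P_\pm(d))|$ is monotone increasing from $0$ to $1$ --- but your choice of the middle side as base makes that computation genuinely cleaner, since the SAP condition forces the foot of the altitude to $-2d$ and the quotient factors over $\mathbb{Q}$. What the paper's geometric argument buys is independence from any explicit formula for $\alpha$; what yours buys is a quantitative statement (an explicit strictly monotone bijection between $|d|$ and $\alpha$) that immediately yields both injectivity up to reflection and surjectivity, with no separate existence step. The only point worth making explicit is that you apply the criterion $\Delta\sim\Delta'\Leftrightarrow\alpha(\Delta)=\alpha(\Delta')$ to triangles that do not share a base; this is legitimate because $\alpha$ is a similarity (indeed reflection) invariant, but it deserves a sentence.
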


\begin{proposition}\label{prop_isosceles}
Two isosceles triangles $\Delta$ and $\Delta'$ are equisectionally equivalent if and only if either they are similar or the base angles satisfy $\tan\angle \theta \cdot\tan\angle \theta'=3$. 
When $[\Delta]\ne[\Delta']$, $[T_q(\Delta)]=[\Delta']$ if and only if $q=1/3$ or $2/3$. Any triangle is equisectionally equivalent to an isosceles triangle. 

\end{proposition}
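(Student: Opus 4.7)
The plan is to reduce the first two claims to coordinate computations via the criterion $\alpha(\Delta) = \alpha(\Delta')$ from Theorem \ref{main_theorem}, and to handle the last claim by a continuity argument. Place an isosceles triangle $\Delta$ with $B=(-1,0)$, $C=(1,0)$, $A=(0,h)$, so its base angle $\theta$ satisfies $\tan\theta = h$. The reflection symmetry of the configuration forces the auxiliary points of Definition \ref{def_alpha} to be $D=(0,\sqrt{3})$ and $\overline{D}=(0,-\sqrt{3})$, giving
\[
\alpha(\Delta) \;=\; \frac{|h-\sqrt{3}|}{h+\sqrt{3}} \;=\; \frac{|\tan\theta-\sqrt{3}|}{\tan\theta+\sqrt{3}}.
\]
For two isosceles triangles with base angles $\theta,\theta'$, setting $\alpha(\Delta)=\alpha(\Delta')$ and performing a short sign analysis on $\tan\theta-\sqrt{3}$ and $\tan\theta'-\sqrt{3}$ yields either $\tan\theta=\tan\theta'$ or $\tan\theta\cdot\tan\theta'=3$. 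Combined with Theorem \ref{main_theorem} this establishes the first equivalence.

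Next, to handle the assertion about $q$, I would compute the vertices of $T_q(\Delta)$ in the same coordinates,
\[
A'=(1-2q,\,0),\qquad B'=\bigl(q,\,(1-q)h\bigr),\qquad C'=\bigl(q-1,\,qh\bigr),
\]
and equate each of the three possible pairs of squared side lengths in turn. Each pair produces a factored equation; explicitly one gets
\[
3(2q-1)=(2q-1)h^2,\quad 3q(3q-2)=q(3q-2)h^2,\quad 3(1-3q)(1-q)=(1-3q)(1-q)h^2.
\]
For non-equilateral $\Delta$ (i.e.\ $h^2\ne 3$) these force $q\in\{0,\,1/3,\,1/2,\,2/3,\,1\}$. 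The values $q=0,1$ give only relabellings of $\Delta$, and $q=1/2$ gives the medial triangle, which for an isosceles triangle is similar to $\Delta$; hence only $q=1/3$ and $q=2/3$ can yield a genuinely new isosceles similarity class. Substituting either value and reading off the side lengths shows the image is isosceles with base angle $\theta'$ satisfying $\tan\theta' = 3/\tan\theta$, consistent with the first part.

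For the last assertion, the function $\alpha(\theta)=(\sqrt{3}-\tan\theta)/(\sqrt{3}+\tan\theta)$ is continuous and strictly decreasing on $(0,\pi/3]$ from $1$ down to $0$, so it attains every value in $[0,1)$. Since every triangle has $\alpha\in[0,1)$, Theorem \ref{main_theorem} produces an isosceles representative of its equisectional class. The chief obstacle is modest, essentially bookkeeping: one must pair each of the three ``isosceles'' sub-cases with its non-trivial root in $q$, and verify that the spurious roots $q=0,1/2,1$ yield only triangles similar to $\Delta$ rather than genuinely new classes.
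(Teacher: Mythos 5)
Your proposal is correct, but it takes a genuinely different route from the paper. The paper proves all three claims inside the Nakamura--Oguiso moduli space $\mathcal{D}$: isosceles classes form the real diameter, each equisectional class is a circle centred at $0$ and hence meets that diameter in exactly two points (giving the third statement at once, and showing that any $T_q$ carrying one of the two isosceles classes to the other must act as a half-turn), after which Theorem \ref{thm_NO} (equivalently Corollary \ref{cor_T_q_id}(2)) pins down $q=1/3,2/3$, and the relation $\tan\theta\cdot\tan\theta'=3$ is read off from the picture of $T_{1/3}$ applied to an isosceles triangle. You instead stay in Euclidean coordinates: you get the first claim from the invariant $\alpha$ of Theorem \ref{main_theorem} by an explicit computation of $|AD|/|A\overline{D}|=|\tan\theta-\sqrt3|/(\tan\theta+\sqrt3)$ and a sign analysis, you get the second by writing down the three side lengths of $T_q(\Delta)$ and factoring the three isosceles conditions as $(1-2q)(h^2-3)=0$, $q(3q-2)(h^2-3)=0$, $(1-3q)(1-q)(h^2-3)=0$ (which I checked are exactly right), and you get the third from monotonicity of $\alpha$ on base angles in $(0,\pi/3]$. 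What the paper's approach buys is brevity and structure --- the ``circle meets a diameter in two points'' argument is the same mechanism driving Corollaries \ref{cor_right} and \ref{cor_SAP}, and it isolates the only conceptual input (rotations by odd multiples of $\pi$) cleanly. What your approach buys is self-containedness and rigor at one spot where the paper is thin: the paper justifies $\tan\theta\cdot\tan\theta'=3$ only by reference to a figure, whereas your computation derives it twice, once from $\alpha(\Delta)=\alpha(\Delta')$ and once by reading off the base angle $\tan\theta'=3/h$ of $T_{1/3}(\Delta)$. The only bookkeeping you should make explicit is that the exhaustive list $q\in\{0,1/3,1/2,2/3,1\}$ is legitimate precisely because $[T_q(\Delta)]=[\Delta']$ with $\Delta'$ isosceles forces $T_q(\Delta)$ itself to be isosceles; with that said, your case analysis is complete.
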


Corollaries \ref{cor_right} and \ref{cor_SAP} imply the importance of an equisectional operator that maps a similarity class of a given triangle to that of its mirror image. 
\begin{lemma}\label{lemma}
Let $\Delta_{ab}$ denote a triangle with side lengths $1,a,b$ in the anti-clockwise order. Then $[T_q(\Delta_{ab})]=[\Delta_{ba}]$ if and only if 
\[
q=\frac{b^2-1}{a^2+b^2-2},\>\> \frac{1-a^2}{b^2+1-2a^2},\>\>\mbox{ or }\>\> \frac{a^2-b^2}{1+a^2-2b^2}\,.
\]
\end{lemma}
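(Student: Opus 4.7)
The plan is to pass to the moduli space of similarity classes via complex coordinates, encode $T_q$ as a Möbius expression in the parameter $q$, and match $T_q(\Delta_{ab})$ against the three representatives of $[\Delta_{ba}]$.

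First, I would place $\Delta_{ab}$ with $A=0$, $B=1$, $C=z\in\mathbb{H}$, where $|z|=b$ and $|z-1|=a$, so that $z\bar z=b^2$ and $z+\bar z=1+b^2-a^2$. A direct calculation gives the vertices of $T_q(\Delta_{ab})$ as $A'=q+(1-q)z$, $B'=qz$, $C'=1-q$, and normalizing $A'\mapsto 0$, $B'\mapsto 1$ identifies the similarity class of $T_q(\Delta_{ab})$ with the point
\[
T_q(z)\ :=\ \frac{C'-A'}{B'-A'}\ =\ \frac{(2q-1)+(1-q)z}{q-(2q-1)z}\ \in\ \mathbb{H}.
\]

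Next, I would note that a cyclic shift of the labels $(A,B,C)$ corresponds, at the level of the coordinate $z$, to the Möbius map $\phi(w)=1/(1-w)$, and that $\phi$ has order three; hence an \emph{unlabeled} similarity class is precisely a $\phi$-orbit in $\mathbb{H}$. The mirror image of a triangle, once relabeled anti-clockwise in the reflected half-plane, corresponds to the involution $m(z)=1/\bar z$. Consequently $[\Delta_{ba}]$ is represented by the three-element orbit
\[
\bigl\{m(z),\ \phi(m(z)),\ \phi^2(m(z))\bigr\}=\bigl\{1/\bar z,\ \bar z/(\bar z-1),\ 1-\bar z\bigr\},
\]
so $[T_q(\Delta_{ab})]=[\Delta_{ba}]$ is equivalent to $T_q(z)$ being equal to one of these three targets.

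The three resulting equations, after clearing denominators, are each \emph{linear} in $q$: the only quadratic combinations appearing are $z\bar z=b^2$ and $z+\bar z=1+b^2-a^2$, both known constants. Substituting these and solving the three linear equations for the targets $1-\bar z$, $1/\bar z$, and $\bar z/(\bar z-1)$ respectively should yield
\[
q=\frac{b^2-1}{a^2+b^2-2},\quad q=\frac{1-a^2}{b^2+1-2a^2},\quad q=\frac{a^2-b^2}{1+a^2-2b^2}.
\]

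The main bookkeeping obstacle is the mirror-image identification: one must verify carefully that reflecting $\Delta_{ab}$ across the real axis and re-labeling anti-clockwise amounts to $z\mapsto 1/\bar z$, and that the three representatives of $[\Delta_{ba}]$ are exactly the $\phi$-orbit of this point. Once the moduli-space picture is fixed, the count is forced — at most three solutions for $q$, one per orbit element, each from a linear equation — and the remaining work is routine algebraic simplification.
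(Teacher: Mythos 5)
Your proposal is correct, and it takes a genuinely different route from the paper. The paper's proof passes through the Nakamura--Oguiso moduli space $\mathcal{D}$: it invokes Theorem \ref{thm_NO} (that $T_q$ acts on $\mathcal{D}$ as rotation by $6\arg(-1+(1-2q)\sqrt3\,i)$) and the fact that mirror images correspond to complex conjugates in $\mathcal{D}$, computes $\tan\arg f(z)=\sqrt3\,(b^2-1)/(2a^2-b^2-1)$ from the coordinates \eqref{xy_ab}, matches this against $\tan\tau_q$ to extract a single value $q=(1-a^2)/(b^2+1-2a^2)$, and then produces the other two values by appealing to Corollary \ref{cor_T_q_equal}. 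You instead stay entirely in the upper half-plane: you compute the induced action of $T_q$ on the vertex coordinate as an explicit expression that is M\"obius in $z$ and rational (in fact linear, after clearing denominators) in $q$, represent $[\Delta_{ba}]$ by the full three-element $\lambda$-orbit $\{1/\bar z,\ \bar z/(\bar z-1),\ 1-\bar z\}$, and obtain all three values of $q$ simultaneously, one per cyclic relabeling of the target. I checked the algebra and it closes as you predict: for instance the target $1-\bar z$ yields $q\,(1+z+\bar z-2z\bar z)=1-z\bar z$, i.e.\ $q\,(2-a^2-b^2)=1-b^2$, and the other two targets give the remaining formulas; in each case only the symmetric combinations $z\bar z=b^2$ and $z+\bar z=1+b^2-a^2$ survive. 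Your route is more elementary and self-contained --- it needs neither Theorem \ref{thm_NO} nor Corollary \ref{cor_T_q_equal}, and it avoids the modulo-$\pi$ bookkeeping with $\tan$ and $\arg$ --- at the cost of having to verify the labeling/orientation conventions for the mirror image (which you flag and which does check out: $1-\bar z$ realizes side lengths $1,b,a$ anti-clockwise on the base $[0,1]$). What the paper's route buys is brevity given machinery that is already in place and reused elsewhere; what yours buys is a transparent bijection between the three values of $q$ and the three cyclic positions of the target triangle, and a clean ``if and only if'' since each target contributes exactly one linear equation in $q$.
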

\begin{theorem}\label{thm_rationality}
Suppose $\Delta$ and $\Delta'$ are isosceles (or right or SAP) triangles such that all the sides are rational numbers. If $T_q(\Delta)=\Delta'$ then $q$ is also rational, namely, $\Delta\sim\Delta'$ if and only if $\Delta\sim_\mathbb{Q}\Delta'$. 
\end{theorem}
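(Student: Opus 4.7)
The plan is to leverage the structural results already proved for each of the three special triangle types and to observe that, in every case, a single $T_q$ suffices whose value is given by formulas that stay in $\mathbb{Q}$ whenever the side lengths of $\Delta$ and $\Delta'$ do. The ``$\Leftarrow$'' direction of the claimed equivalence is trivial since $\mathbb{Q}\subset\mathbb{R}$, so the work is in ``$\Rightarrow$''.

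First I would dispose of the case $[\Delta]=[\Delta']$, where similarity alone already certifies $\Delta\sim_\mathbb{Q}\Delta'$. For the isosceles case with $[\Delta]\ne[\Delta']$, Proposition~\ref{prop_isosceles} pins $q$ down to $1/3$ or $2/3$ regardless of the side lengths, so rationality is automatic. For the right and SAP cases with $[\Delta]\ne[\Delta']$, Corollaries~\ref{cor_right} and \ref{cor_SAP} force $\Delta'$ to be reversely similar to $\Delta$. After rescaling by a rational factor I may assume $\Delta=\Delta_{ab}$ with $a,b\in\mathbb{Q}$ and $[\Delta']=[\Delta_{ba}]$, at which point Lemma~\ref{lemma} hands me three explicit candidates
\[
q\;=\;\frac{b^{2}-1}{a^{2}+b^{2}-2},\quad \frac{1-a^{2}}{b^{2}+1-2a^{2}},\quad \frac{a^{2}-b^{2}}{1+a^{2}-2b^{2}},
\]
each a rational function of $a^{2}$ and $b^{2}$, hence rational. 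Picking any well-defined one of the three exhibits the required rational equisection connecting $\Delta$ to $\Delta'$.

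Because all the genuine geometry is packaged into the earlier proposition, corollaries, and lemma, the only remaining obstacle is a brief sanity check: in the right and SAP cases, I must verify that for a rational-sided, non-regular triangle at least one of the three denominators in Lemma~\ref{lemma} is nonzero, so that a rational $q$ is actually delivered. This is a short case analysis exploiting the Pythagorean relation $1+a^{2}=b^{2}$ (with $b$ the hypotenuse) in the right case and the arithmetic-progression relation $2a=1+b$ in the SAP case; in each case one checks that the three denominators, viewed as polynomials in the single remaining rational parameter, cannot all vanish simultaneously unless the triangle is equilateral, so that the argument above always selects a valid rational $q$.
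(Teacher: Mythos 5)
Your proposal is correct and follows essentially the same route as the paper: dispose of $[\Delta]=[\Delta']$, invoke Proposition~\ref{prop_isosceles} for the isosceles case, and combine Corollaries~\ref{cor_right}, \ref{cor_SAP} with the explicit rational formulas of Lemma~\ref{lemma} for the right and SAP cases. The only (harmless) divergence is that for $[\Delta]=[\Delta']$ the paper cites Corollary~\ref{cor_T_q_id} to conclude $q\in\{0,1/2,1\}$, whereas you appeal to similarity directly; your added check that not all three denominators in Lemma~\ref{lemma} can vanish is a worthwhile detail the paper leaves implicit.
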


\section{The moduli space by Nakamura and Oguiso}\label{section_NO}
%
Nakamura and Oguiso gave a bijection between $\mathcal{S}$ and the open unit disc $\mathcal{D}$ in $\mathbb{C}$, and showed that the set of equisection operators $\{T_q\}_{q\in\mathbb{R}}$ acts on $\mathcal{D}$ as rotations. 

\smallskip
Let us first introduce the result of Nakamura-Oguiso \cite{NO}. 
We work in the complex plane $\mathbb{C}$. Let $\mathcal{H}$ be the upper half plane $\{z\in\mathbb{C}\,:\,\I z>0\}$, $\mathcal{B}$ and $\mathcal{D}$ be the open unit discs in variables $Z$ and $w$ respectively. Put $\rho=\exp(\pi i/3)$. Define $\lambda\colon\mathcal{H}\to\mathcal{H}, \, \mu\colon\mathcal{B}\to\mathcal{B}, \, f\colon\mathcal{H}\to\mathcal{B}$, and $g\colon\mathcal{B}\to\mathcal{D}$ by 
\begin{equation}\label{def_NO}
\lambda(z)=\frac1{1-z}, \quad \mu(Z)=\rho^2Z, \quad f(z)=\frac{\rho^2-\rho z}{\rho^2+z}=-\rho\,\frac{z-\rho}{z-\rho^{-1}}, \quad g(Z)=Z^3.
\end{equation}
Then we have the following commutative diagram: 

$$
\begin{CD}
\mathcal{H} @>f >> \mathcal{B} @>g >> \mathcal{D}\\
@V{\lambda}VV @VV{\mu}V  @| \\
\mathcal{H} @>f >> \mathcal{B} @>g >> \mathcal{D}
\end{CD}
$$

Let us fix the base of a triangle to be $[01]$ so that a triangle can be identified by the vertex $z\in\mathcal{H}$. 
Suppose $\Delta=\triangle ABC$ is similar to the triangle $\triangle z01$. Since the three choices of the base, $BC, CA$, or $AB$ corresponds to $z, \lambda(z)$, or $\lambda^2(z)$, there is a bijection $\varphi\colon\mathcal{S}\to\mathcal{D}$ (\cite{NO}) given by 
\begin{equation}\label{def_varphi}
\varphi([\triangle z01])=g\circ f(z)=\left(\frac{z-\rho}{z-\rho^{-1}}\right)^3\,.
\end{equation}
Let us call $\mathcal{D}$ the {\em Nakamura-Oguiso moduli space} of the similarity classes of triangles. 

From the construction of the moduli space and the property of linear fractional transformations, it follows that the set of isosceles triangles is expressed by a real axis in $\mathcal{D}$ (explained in Section \ref{section_proofs}), and reversely similar triangles by complex conjugate numbers, as was pointed out in \cite{NO}. 

The equisection operators and the equisectional equivalence relation on $\mathcal{S}$, denoted by the same symbols, can be naturally induced from those on $\mathcal{T}$. 
We shall express the operators on $\mathcal{D}$ given by $\varphi\circ T_q\circ\varphi^{-1}$ and $\varphi\circ S_q\circ\varphi^{-1}$ simply by $T_q$ and $S_q$ respectively. 
Since our $T_q$ and $S_q$ are equal to $T_{0,q}$ and $T_{1-q,q}$ in \cite{NO} respectively, Theorem 1 of \cite{NO} implies
\begin{theorem}\label{thm_NO}{\rm (\cite{NO})}
The operator $T_q$ acts on $\mathcal{D}$ as a rotation by angle 
\begin{eqnarray}\label{formula_T_q}
6\arg\left((1-2q)\rho-(1-q)\right)=6\arg \left(-1+(1-2q)\sqrt3\,i\right),
\end{eqnarray}
and $S_q$ by angle $6\arg(\rho-q)$. 
\end{theorem}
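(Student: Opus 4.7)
The plan is to compute, for the reference triangle $\triangle z01$ with $z\in\mathcal{H}$, the Möbius transformation of $z$ induced by $T_q$, then transfer the action through $f$ and $g$ to $\mathcal{D}$, where it must turn out to be a rotation. Taking $A=z$, $B=0$, $C=1$, the $T_q$-image has vertices $A'=1-q$, $B'=q+(1-q)z$, $C'=qz$; normalizing to base $[0,1]$ via $w=(A'-B')/(C'-B')$ gives
\[
T_q(z)=\frac{(1-q)z-(1-2q)}{(1-2q)z+q}.
\]
Using the identity $\rho^2=\rho-1$, a one-line check shows $T_q(\rho)=\rho$, and by complex conjugation $T_q(\rho^{-1})=\rho^{-1}$.

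Since $f$ sends $\rho\mapsto 0$ and $\rho^{-1}\mapsto\infty$, the conjugate $f\circ T_q\circ f^{-1}$ is a Möbius transformation of $\mathcal{B}$ fixing $0$ and $\infty$, hence is of the form $Z\mapsto c_qZ$; preservation of the unit disc forces $|c_q|=1$. The chain rule at the fixed point $\rho$ gives $c_q=T_q'(\rho)$, and differentiating the Möbius formula yields
\[
c_q=\frac{3q^2-3q+1}{\bigl[(1-2q)\rho+q\bigr]^2}.
\]
Since the numerator is positive real, $\arg c_q=-2\arg((1-2q)\rho+q)$. Passing through $g(Z)=Z^3$ triples the angle, so $T_q$ acts on $\mathcal{D}$ as rotation by $-6\arg((1-2q)\rho+q)$.

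To reconcile this with the form stated in (\ref{formula_T_q}), I would compute
\[
\bigl[(1-2q)\rho+q\bigr]\bigl[(1-2q)\rho-(1-q)\bigr]=-(3q^2-3q+1),
\]
again using $\rho^2-\rho=-1$. This is a negative real, so the two arguments sum to $\pi$ and six times their sum vanishes modulo $2\pi$, identifying the two expressions. Substituting $\rho=\tfrac12+\tfrac{\sqrt3}{2}i$ further gives $(1-2q)\rho-(1-q)=\tfrac12\bigl[-1+(1-2q)\sqrt3\,i\bigr]$, which is the second equivalent form.

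The $S_q$ case runs along the same lines. Solving the two linear equations for $A''=AA'\cap BB'$ (and cyclically for $B''$, $C''$) and forming $(A''-B'')/(C''-B'')$ yields the simpler expression
\[
S_q(z)=\frac{(1-q)z+q}{1-qz},
\]
which again fixes $\rho$ and $\rho^{-1}$, with $S_q'(\rho)=(q^2-q+1)/(1-q\rho)^2$; the identity $(1-q\rho)(\rho-q)=(q^2-q+1)\rho$ converts the rotation angle $-6\arg(1-q\rho)$ into $6\arg(\rho-q)$ modulo $2\pi$. The only real obstacle is bookkeeping: once the quadratic relation $\rho^2+1=\rho$ is used systematically, the Möbius simplifications and the reconciliation of the various expressions for the argument all collapse quickly.
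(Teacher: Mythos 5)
Your proof is correct, but note that the paper itself does not prove this statement: it simply observes that its $T_q$ and $S_q$ coincide with $T_{0,q}$ and $T_{1-q,q}$ of Nakamura--Oguiso and quotes Theorem~1 of \cite{NO}. What you have written is therefore a self-contained derivation where the paper offers only a citation. Your computations all check: the normalized M\"obius maps $T_q(z)=\bigl((1-q)z-(1-2q)\bigr)/\bigl((1-2q)z+q\bigr)$ and $S_q(z)=\bigl((1-q)z+q\bigr)/(1-qz)$ are what one gets from $A'=1-q$, $B'=q+(1-q)z$, $C'=qz$ and from the three cevian intersections; both fix $\rho$ and $\rho^{-1}$ via $\rho^2=\rho-1$; the multiplier-at-the-fixed-point argument correctly identifies the rotation angle of the conjugated map on $\mathcal{B}$, and cubing via $g$ triples it; and the identities $\bigl[(1-2q)\rho+q\bigr]\bigl[(1-2q)\rho-(1-q)\bigr]=-(3q^2-3q+1)$ and $(1-q\rho)(\rho-q)=(q^2-q+1)\rho$ do reconcile your angles with the stated ones modulo $2\pi$ (the factors $-6\pi$ and $-2\pi$ both vanish). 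The one step you should make explicit is why $f\circ T_q\circ f^{-1}$ preserves the unit disc: this needs $T_q$ to preserve $\mathcal{H}$, which holds because the determinant $(1-q)q+(1-2q)^2=3q^2-3q+1$ is positive for every real $q$ (and likewise $q^2-q+1>0$ for $S_q$); ``$|c_q|=1$'' then follows from the Schwarz lemma applied to the map and its inverse, or simply from the classification of disc automorphisms fixing $0$. With that line added, your argument is a complete proof of the quoted theorem.
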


Theorem \ref{thm_NO} implies that $\sim$ is in fact an equivalence relation, that $\Delta\sim\Delta'$ if and only if there is a real numbers $q$ such that $\Delta'$ is similar to $T_q(\Delta)$, and that the equivalence relation generated by similarity and $\{S_q\}_{q\in \mathbb{R}\setminus\{1/2\}}$ is identical with the equisectional equivalence.

\bigskip
The following corollaries can be obtained by simple computation. 

\begin{corollary}\label{cor_T_q_id}{\rm (\cite{NO})} \begin{enumerate}
\item $T_q=Id_{\mathcal{D}}$ if and only if $q=0,1/2$, or $1$. 
\item $T_q\ne Id_{\mathcal{D}}$ and $T_q{}^2=Id_{\mathcal{D}}$ if and only if $q=1/3$ or $2/3$. 
\end{enumerate}
\end{corollary}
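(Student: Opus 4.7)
The plan is to apply Theorem \ref{thm_NO} directly and reduce both statements to a question about the argument of a single complex number. Writing $z_q=-1+(1-2q)\sqrt3\,i$, the theorem says that $T_q$ acts on $\mathcal D$ as rotation by the angle $6\arg z_q$. Hence $T_q=\mathrm{Id}_{\mathcal D}$ is equivalent to $6\arg z_q\equiv 0\pmod{2\pi}$, and $T_q^{\,2}=\mathrm{Id}_{\mathcal D}$ is equivalent to $12\arg z_q\equiv 0\pmod{2\pi}$, i.e., to $\arg z_q$ being an integer multiple of $\pi/6$.

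The key geometric observation is that $z_q$ lies on the vertical line $\R z=-1$, so its argument is constrained to the arc $(\pi/2,\pi]\cup[-\pi,-\pi/2)$. The integer multiples of $\pi/3$ in this arc are exactly $\{\,2\pi/3,\pi,-2\pi/3\,\}$, and the integer multiples of $\pi/6$ in this arc are exactly $\{\,2\pi/3,\,5\pi/6,\,\pi,\,-5\pi/6,\,-2\pi/3\,\}$.

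For statement (1), I would solve $\arg z_q\in\{2\pi/3,\pi,-2\pi/3\}$ by equating slopes: $\I z_q/\R z_q=-\sqrt3\,(1-2q)$ must equal $\tan(2\pi/3)=-\sqrt3$, $\tan\pi=0$, or $\tan(-2\pi/3)=\sqrt3$ respectively, together with the correct sign of $\I z_q$ to pick the right branch. This gives $1-2q\in\{1,0,-1\}$, i.e., $q\in\{0,1/2,1\}$. For statement (2), the same procedure applied to the two remaining values $\pm 5\pi/6$ yields $-\sqrt3\,(1-2q)=\tan(\pm5\pi/6)=\mp 1/\sqrt3$, hence $1-2q=\pm 1/3$, i.e., $q\in\{1/3,2/3\}$; and these are precisely the values of $q$ with $T_q^{\,2}=\mathrm{Id}$ but $T_q\neq\mathrm{Id}$, since they are exactly the elements of the second list not in the first.

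There is no real obstacle here beyond keeping track of the principal branch of $\arg$ and the sign of $1-2q$; the whole corollary is an enumeration of roots of unity among the directions from the origin to points on the line $\R z=-1$, and both claims follow by inspection once the angle formula of Theorem \ref{thm_NO} has been applied.
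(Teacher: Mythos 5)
Your proof is correct and is exactly the "simple computation" from Theorem \ref{thm_NO} that the paper invokes (it gives no further details): reducing both claims to locating the multiples of $\pi/3$ and $\pi/6$ among arguments of points on the line $\R z=-1$. Nothing to add.
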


\begin{corollary}\label{cor_T_q_equal}
$T_{q'}=T_q$ if and only if 
\[q'=q,\>\>\>q'=\frac{2q-1}{3q-1} \>\>\>\left(q\ne\frac13\right), \>\>\mbox{ or }\>\>\>q'=\frac{q-1}{3q-2}\>\>\>\left(q\ne\frac23\right).
\]
\end{corollary}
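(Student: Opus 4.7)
The plan is to leverage Theorem~\ref{thm_NO}, which identifies $T_q$ with the rotation of $\mathcal D$ by angle $6\arg w_q$, where I set $w_q := -1+(1-2q)\sqrt{3}\,i$. As $q$ ranges over $\mathbb R$, the point $w_q$ sweeps out the vertical line $\{\R z = -1\}\subset\mathbb C$. The equality $T_{q'}=T_q$ is thus the assertion $\arg w_{q'}\equiv\arg w_q\pmod{\pi/3}$, and since $\rho^3=-1$ absorbs the sign, this is equivalent to the existence of $s\in\mathbb R\setminus\{0\}$ and $k\in\{0,1,2\}$ with
\[
w_{q'}=s\,\rho^k\, w_q.
\]

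For each $k$ I would solve this complex equation by matching real and imaginary parts, using $\R w_{q'}=-1$. The real-part equation is linear in $s$ and determines it as a rational function of $q$, after which the imaginary-part equation yields $1-2q'$, and hence $q'$, as a rational function of $q$. For $k=0$ this forces $s=1$ and $q'=q$. Direct expansion, using $\rho=\tfrac12(1+\sqrt3\,i)$ and $\rho^2=\tfrac12(-1+\sqrt3\,i)$, gives $\R(\rho w_q)=3q-2$ and $\R(\rho^2 w_q)=3q-1$, and the two remaining branches then yield, after routine simplification,
\[
q' = \frac{q-1}{3q-2} \quad (k=1) \qquad \text{and} \qquad q' = \frac{2q-1}{3q-1} \quad (k=2).
\]

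The exceptional values $q=2/3$ and $q=1/3$ are precisely the zeros of $\R(\rho w_q)$ and $\R(\rho^2 w_q)$: at those values $\rho^k w_q$ is purely imaginary, so the ray through it is parallel to the line $\{\R z=-1\}$ swept by $w_{q'}$, the linear equation for $s$ becomes inconsistent, and that branch contributes no additional $q'$ (geometrically, the second intersection escapes to infinity). As a sanity check I would verify that substituting $q=1/3$ into the surviving branch $q'=(q-1)/(3q-2)$ returns $q'=2/3$, recovering the coincidence $T_{1/3}=T_{2/3}$ hinted at by Corollary~\ref{cor_T_q_id}(2). The argument is essentially a computation once Theorem~\ref{thm_NO} is invoked; the only small bookkeeping point, which is the one place to be careful, is the reduction from six sixth-roots of unity to three representatives (by letting $s$ take either sign), since without it one would over-count and be misled into expecting six candidate values of $q'$ rather than the three listed.
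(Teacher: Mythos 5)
Your argument is correct and is essentially the computation the paper has in mind: the paper states this corollary with only the remark that it ``can be obtained by simple computation'' from Theorem \ref{thm_NO}, and your reduction of $T_{q'}=T_q$ to $w_{q'}=s\rho^k w_q$ ($s\in\mathbb{R}\setminus\{0\}$, $k\in\{0,1,2\}$) followed by matching real and imaginary parts is exactly that computation, with the values $\R(\rho w_q)=3q-2$ and $\R(\rho^2 w_q)=3q-1$ and the resulting branches all checking out. Your handling of the excluded values $q=1/3,\,2/3$ and the reduction from six roots of unity to three is also correct.
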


\begin{corollary}\label{cor_T_q_inverse}{\rm (\cite{NO})}
$T_{q'}=T_q{}^{-1}$ if and only if 
\[q'=1-q,\>\>\>q'=\frac{2q-1}{3q-2} \>\>\>\left(q\ne\frac23\right), \>\>\mbox{ or }\>\>\>q'=\frac{q}{3q-1}\>\>\>\left(q\ne\frac13\right).
\]
\end{corollary}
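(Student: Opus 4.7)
The natural approach is to transport the statement to the Nakamura-Oguiso moduli space $\mathcal{D}$ and apply Theorem \ref{thm_NO}. Writing $w(q):=-1+(1-2q)\sqrt{3}\,i$, the operator $T_q$ acts on $\mathcal{D}$ as rotation by $6\arg w(q)$, so $T_{q'}=T_q{}^{-1}$ is equivalent to $6\arg w(q)+6\arg w(q')\equiv 0\pmod{2\pi}$, i.e., to the condition that $\arg\bigl(w(q)w(q')\bigr)$ is an integer multiple of $\pi/3$.

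The computation that follows is routine. Setting $a=1-2q$ and $b=1-2q'$,
\[
w(q)w(q')=(1-3ab)-\sqrt{3}(a+b)\,i.
\]
A nonzero complex number has argument in $\frac{\pi}{3}\mathbb{Z}$ precisely when it lies on one of the three lines through the origin of slopes $0$, $\sqrt{3}$, and $-\sqrt{3}$, which translate respectively into the three equations: (i) $a+b=0$, (ii) $3ab-a-b-1=0$, and (iii) $3ab+a+b-1=0$. Case (i) gives $q'=1-q$ at once. In case (ii), assuming $a\neq 1/3$ (equivalently $q\neq 1/3$), solving for $b$ yields $b=(1+a)/(3a-1)$, and back-substitution produces $q'=q/(3q-1)$. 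In case (iii), assuming $a\neq -1/3$ (equivalently $q\neq 2/3$), $b=(1-a)/(3a+1)$ yields $q'=(2q-1)/(3q-2)$.

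The one subtlety is checking that the excluded values $q=1/3$ and $q=2/3$ do not lose any genuine solution. At $q=1/3$, Corollary \ref{cor_T_q_id} gives $T_{1/3}{}^2=\mathrm{Id}_{\mathcal{D}}$, so $T_q{}^{-1}=T_q$; case (i) supplies $q'=2/3$ and, together with Corollary \ref{cor_T_q_equal} (which identifies $T_{1/3}$ with $T_{2/3}$), accounts for every $q'$ with $T_{q'}=T_q{}^{-1}$. The analogous check handles $q=2/3$. I do not anticipate a real obstacle: the geometric content is absorbed by Theorem \ref{thm_NO}, and what remains is a clean case analysis on three lines in $\mathbb{C}$, with the excluded denominators corresponding exactly to the values where $T_q$ becomes an involution.
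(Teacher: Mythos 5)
Your proof is correct, and it follows exactly the route the paper intends: the paper states this corollary as one that ``can be obtained by simple computation'' from Theorem \ref{thm_NO}, and your reduction to $\arg\bigl(w(q)w(q')\bigr)\in\frac{\pi}{3}\mathbb{Z}$ followed by the three-line case analysis is a correct instantiation of that computation (I checked the algebra in all three cases, and your handling of the excluded values $q=1/3,\,2/3$ is sound).
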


\begin{remark}\rm The six functions of $q$ which appear in the right hand sides in Corollaries \ref{cor_T_q_equal} and \ref{cor_T_q_inverse} form a non-abelian group with the operation of composition, which is isomorphic to the full permutation group of three elements. 
\end{remark}

\begin{corollary}\label{cor_T_q_composition} 
Given real numbers $q$ and $q'$. $T_q\circ T_{q'}=T_{q'}\circ T_q=T_{q''}$ if and only if 
\[
q''=\frac{3qq'-2(q+q')+1}{6qq'-3(q+q')+1}, \>\>\>
q''=-\frac{q+q'-1}{3qq'-3(q+q')+2}, \>\>\mbox{ or }\>\>\>
q''=\frac{3qq'-(q+q')}{3qq'-1}.
\]
\end{corollary}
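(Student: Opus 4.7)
The plan is to work on the Nakamura-Oguiso moduli disc $\mathcal{D}$. By Theorem~\ref{thm_NO}, each $T_q$ is rotation of $\mathcal{D}$ by the angle $6\arg\phi(q)$, where $\phi(q):=-1+(1-2q)\sqrt{3}\,i$. Because all of the $T_q$ share the same center of rotation, the commutativity $T_q\circ T_{q'}=T_{q'}\circ T_q$ is automatic, and $T_q\circ T_{q'}$ is rotation by $6\arg(\phi(q)\phi(q'))$. Hence $T_q\circ T_{q'}=T_{q''}$ is equivalent to
\[
\arg\phi(q'')\equiv\arg(\phi(q)\phi(q'))\pmod{\pi/3}.
\]
As $q''$ ranges over $\mathbb{R}$, $\arg\phi(q'')$ sweeps the open arc $(\pi/2,3\pi/2)$ of length $\pi=3\cdot\pi/3$, so this congruence has (generically) three real solutions in $q''$, matching the three formulas in the corollary.

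The first concrete step is to expand $\phi(q)\phi(q')$ and look for the particular $q''_1$ for which $\phi(q''_1)$ is a \emph{real} scalar multiple of $\phi(q)\phi(q')$; since $\mathbb{R}^\times$ has argument set $\{0,\pi\}$ and both values are multiples of $\pi/3$, this single computation covers two of the six rotation residues at once. Matching real parts and imaginary parts of $\phi(q''_1)$ and $\phi(q)\phi(q')$ gives one linear equation in $q''_1$ whose solution simplifies to the first formula $q''=(3qq'-2(q+q')+1)/(6qq'-3(q+q')+1)$.

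For the other two formulas, rather than repeating the argument in the two rotated cosets corresponding to the remaining residues, I will invoke Corollary~\ref{cor_T_q_equal}: once one $q''_1$ producing the desired rotation has been found, the full list of real $q''$ with $T_{q''}=T_{q''_1}$ consists of $q''_1$ together with $(2q''_1-1)/(3q''_1-1)$ and $(q''_1-1)/(3q''_1-2)$. Substituting the explicit rational function for $q''_1$ into each of these two Möbius transformations and clearing fractions should yield, respectively, $-(q+q'-1)/(3qq'-3(q+q')+2)$ and $(3qq'-(q+q'))/(3qq'-1)$, which are the second and third formulas.

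The whole argument is essentially a single complex-number computation leveraged by the symmetric group of three Möbius transformations identified in the remark after Corollary~\ref{cor_T_q_equal}. The main obstacle is the algebraic bookkeeping in the last step — each substitution produces a ratio of quadratics in $q,q'$ to be simplified and matched against the claimed expression — and one also must verify in the edge cases (such as when the denominator $6qq'-3(q+q')+1$ of the first formula vanishes) that at least one of the other two formulas remains finite and still supplies the correct $q''$.
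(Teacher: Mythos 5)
Your proposal is correct and follows essentially the same route as the paper: both compute the composite rotation angle as $6\arg$ of the product of the two complex numbers attached to $q$ and $q'$, extract one value of $q''$ by normalizing that product to the standard form $-1+(1-2q'')\sqrt3\,i$ up to a real scalar (the paper does this via the substitution $q''=(A+B)/(A+2B)$ with $A=(q+q')-1$, $B=(q+q')-3qq'$), and then obtain the remaining two formulas from Corollary~\ref{cor_T_q_equal}. The only difference is notational (the $-1+(1-2q)\sqrt3\,i$ form versus the $A\rho+B$ form, both of which appear in \eqref{formula_T_q}), and your closing remark about the degenerate case where a denominator vanishes is a point the paper's proof also leaves implicit.
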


\begin{proof} The formula \eqref{formula_T_q} implies that $T_q\circ T_{q'}$ is the rotation by angle 
\[
6\arg\left[\,(-1+(q+q'))\rho+(q+q')-3qq'\,\right].
\]
Since 
\[
6\arg(A\rho+B)\equiv 6\arg\left[\left(1-2\frac{A+B}{A+2B}\right)\rho-\left(1-\frac{A+B}{A+2B}\right)\right] \hspace{0.7cm}(\mbox{modulo }\>2\pi),
\]
substitution $A=(q+q')-1$ and $B=-3qq'+(q+q')$ implies that if we put 
\[
q''=\frac{A+B}{A+2B}=\frac{3qq'-2(q+q')+1}{6qq'-3(q+q')+1},
\]
then $T_q\circ T_{q'}=T_{q'}\circ T_q=T_{q''}$. 
The other two values for $q''$ can be obtained by Corollary \ref{cor_T_q_equal}. 
\end{proof}

Corollaries \ref{cor_T_q_id} (1), \ref{cor_T_q_inverse}, and \ref{cor_T_q_composition} show that the rational equisectional equivalence $\sim_\mathbb{Q}$ is in fact an equivalence relation. 

\section{Proofs of the main results}\label{section_proofs}
In what follows, we restrict ourselves to the case of non-regular triangles. 

\bigskip
\noindent
{\bfseries Proof of Theorem {\rm \bf \ref{main_theorem}.}} \ \ 
Theorem \ref{thm_NO} shows that a set of equisectionally equivalent triangles corresponds to a circle with center $0$ in $\mathcal{D}$. 
Therefore, the formula \eqref{def_varphi} implies 
\[
[\triangle z01]\sim[\triangle z'01]\Longleftrightarrow
\left|\left(\frac{z-\rho}{z-\rho^{-1}}\right)^3\right|=\left|\left(\frac{z'-\rho}{z'-\rho^{-1}}\right)^3\right|
\Longleftrightarrow
\frac{|z-\rho|}{|z-\rho^{-1}|}=\frac{|z'-\rho|}{|z'-\rho^{-1}|},
\]
which proves the equivalence between (1) and (2).

\smallskip
Since $\triangle ABC$, $\triangle A_2BC$, and $\triangle A_3BC$ are similar, the above argument implies that a circle of Apollonius with foci $\rho$ and $\rho^{-1}$ that passes through $A$ also passes through $A_2$ and $A_3$, which proves the equivalence of (1) and (4). 

\smallskip
The equivalence between (2) and (3) follows from computation. Since 
\begin{equation}\label{f_for_beta}
\frac{|z-\rho|^2}{|z-\rho^{-1}|^2}=1-\frac{2\sqrt3\,\I z}{|z-\rho^{-1}|^2}, 
\end{equation}
\[
\frac{|z-\rho|}{|z-\rho^{-1}|}=\frac{|z'-\rho|}{|z'-\rho^{-1}|}\quad\mbox{ if and only if }\quad \frac{\I z}{|z-\rho^{-1}|^2}=\frac{\I z'}{|z'-\rho^{-1}|^2}, 
\]
which, translated to a scale-invariant statement, is equivalent to (3). 
We remark that the formula \eqref{f_for_beta} implies $\displaystyle \beta(\Delta)=\frac{1-\alpha(\Delta)^2}{4\sqrt3}$
. 
{\hfill{\small{${\square}$}}\par\medskip}

\bigskip
Let us give projective geometric explanation of the equivalence between (1) and (2). 
The set of lines through $0$, which are considered as circles through $0$ and $\infty$, is an elliptic pencil of circles defined by $0$ and $\infty$, and the set of concentric circles with center $0$ is a hyperbolic pencil of circles defined by $0$ and $\infty$. They are mutually orthogonal. 
A linear fractional transformation is a conformal map (i.e., it preserves the angles) that maps circles (which include lines that can be considered as circles through $\infty$) to circles, and hence it maps an elliptic pencil (or a hyperbolic pencil) of circles defined by a pair of points to an elliptic pencil (or a hyperbolic pencil) defined by a pair of corresponding points. 

Since $f^{-1}$ is a linear fractional transformation which maps $0$ and $\infty$ to $\rho$ and $\rho^{-1}$, it maps the set of lines through $0$ to an elliptic pencil consisting of the circles through $\rho$ and $\rho^{-1}$, and the set of concentric circles with center $0$ to a hyperbolic pencil defined by $\rho$ and $\rho^{-1}$, which consists of circles of Apollonius with foci $\rho$ and $\rho^{-1}$ (Figure \ref{pencil}). 
Now it follows from the construction of Nakamura-Oguiso moduli space that the vertices $z$ of equisectionally equivalent triangles $\triangle z01$ form a circle of Apollonius with foci $\rho$ and $\rho^{-1}$, which proves the equivalence between (1) and (2). 

\begin{figure}[htbp]
\begin{center}
\includegraphics[width=.8\linewidth]{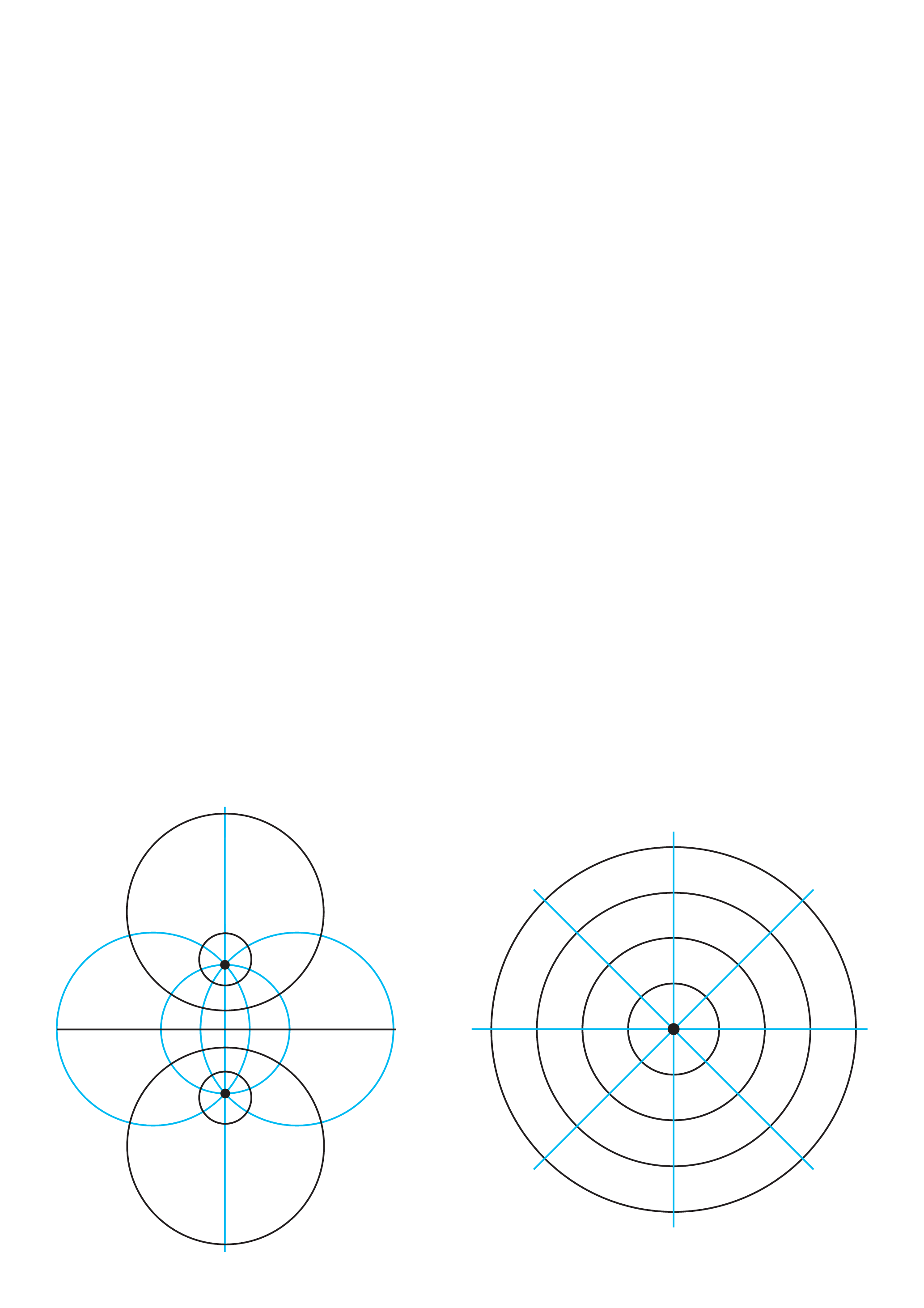}
\caption{A pair of a hyperbolic pencil (left, black) and an elliptic pencil (left, blue) of circles defined by $\rho$ and $\rho^{-1}$, and a pair of a hyperbolic pencil (right, black) and an elliptic pencil (right, blue) of circles defined by $0$ and $\infty$. The former is mapped to the latter by a linear fractional transformation that maps $\rho$ and $\rho^{-1}$ to $0$ and $\infty$ respectively. In each case, a circle in a hyperbolic pencil is orthogonal to a circle in an elliptic pencil.}
\label{pencil}
\end{center}
\end{figure}

\bigskip
In what follows, it sometimes makes things easier to work in a a fundamental domain of $\lambda$, 
\begin{equation}\label{fundamental_domain}
\Omega=\{z\in\mathcal{H}\,:\,|z|<1,\,|z-1|\le1\}\cup\{\rho\},
\end{equation}
which corresponds to studying triangles with the longest edge (one of the longest edges) being fixed to $[0,1]$. 

Let us explain why the isosceles triangles corresponds to $(-1,1)$ in the real axis in the Nakamura-Oguiso moduli space $\mathcal{D}$. 

\begin{definition}\label{def_Gamma} \rm 
Let $\Gamma(\xi,\eta,\zeta)$ be a circle through three points $\xi,\eta$, and $\zeta$. When one of the three points is $\infty$, $\Gamma(\xi,\eta,\zeta)$ is a line. 
We assume that $\Gamma(\xi,\eta,\zeta)$ is oriented by the cyclic order of $\xi,\eta$, and $\zeta$. 
\end{definition}

In $\Omega$, a vertex of an isosceles triangle lies either on the line $\R z=1/2$ or on the circle $|z-1|=1$. 
Since $f$ is a linear fractional transformation with 
\[
f\colon \rho\mapsto0, \>0\mapsto1, \> 1\mapsto\rho^2, \> \rho^{-1}\mapsto\infty, \>\>\mbox{ and }\>\>\infty\mapsto-\rho,
\]
it maps a circle $|z-1|=1$ ($\Gamma(\rho,0,\rho^{-1})$) to the real axis ($\Gamma(0,1,\infty)$), another circle $|z|=1$ ($\Gamma(\rho,1,\rho^{-1})$) to a line joining $0$ and $\rho^2$ ($\Gamma(0,\rho^2,\infty)$), a line $\R z=1/2$ ($\Gamma(\rho,\rho^{-1},\infty)$) to a line segment joining $0$ and $\rho$ ($\Gamma(0,\infty,-\rho)$), the real axis ($\Gamma(0,1,\infty)$) to the unit circle ($\Gamma(1,\rho^2,-\rho)$), and $\Omega$ to one third of the open unit disc $\{Z\in\mathcal{B}\,:\,|Z|<1, 0\le\textrm{arg}Z<\pi/3\}$. 
Therefore, the images of $\varphi$ of $\Omega\cap\{z\,:\,|z-1|=1\}$ and $\Omega\cap\{z\,:\,\R z=1/2\}$ are $[0,1)$ and $(-1,0]$ respectively. 

\bigskip
\noindent
{\bfseries Proof of Proposition {\rm \bf \ref{prop_isosceles}.}} \ \ 
We work in the Nakamura-Oguis moduli space $\mathcal{D}$. 
The isosceles triangles correspond to the real axis in $\mathcal{D}$. 
Each circle with center $0$, which corresponds to a set of equisectionally equivalent non-regular triangles, intersects the real axis in two points. 
It proves the third statement. 
It also proves that if $T_q$ satisfies $T_q(\Delta)=\Delta'$ with two isosceles triangles $\Delta$ and $\Delta'$ then either $[\Delta]=[\Delta']$ or $T_q$ is a rotation by $\pi$ on $\mathcal{D}$. 

Theorem \ref{thm_NO} shows that $T_q$ is a rotation by angle $(2n+1)\pi$ for some $n\in\mathbb{Z}$ if and only if $q=1/3$ or $2/3$, which proves the second statement. 

The equation $\tan\angle \theta \cdot\tan\angle \theta'=3$ follows directly from the fact that $[\Delta']=T_{1/3}([\Delta])$ as is illustrated in Figure \ref{isosceles3}. 
{\hfill{\small{${\square}$}}\par\medskip}

\begin{figure}[htbp]
\begin{center}
\includegraphics[width=.4\linewidth]{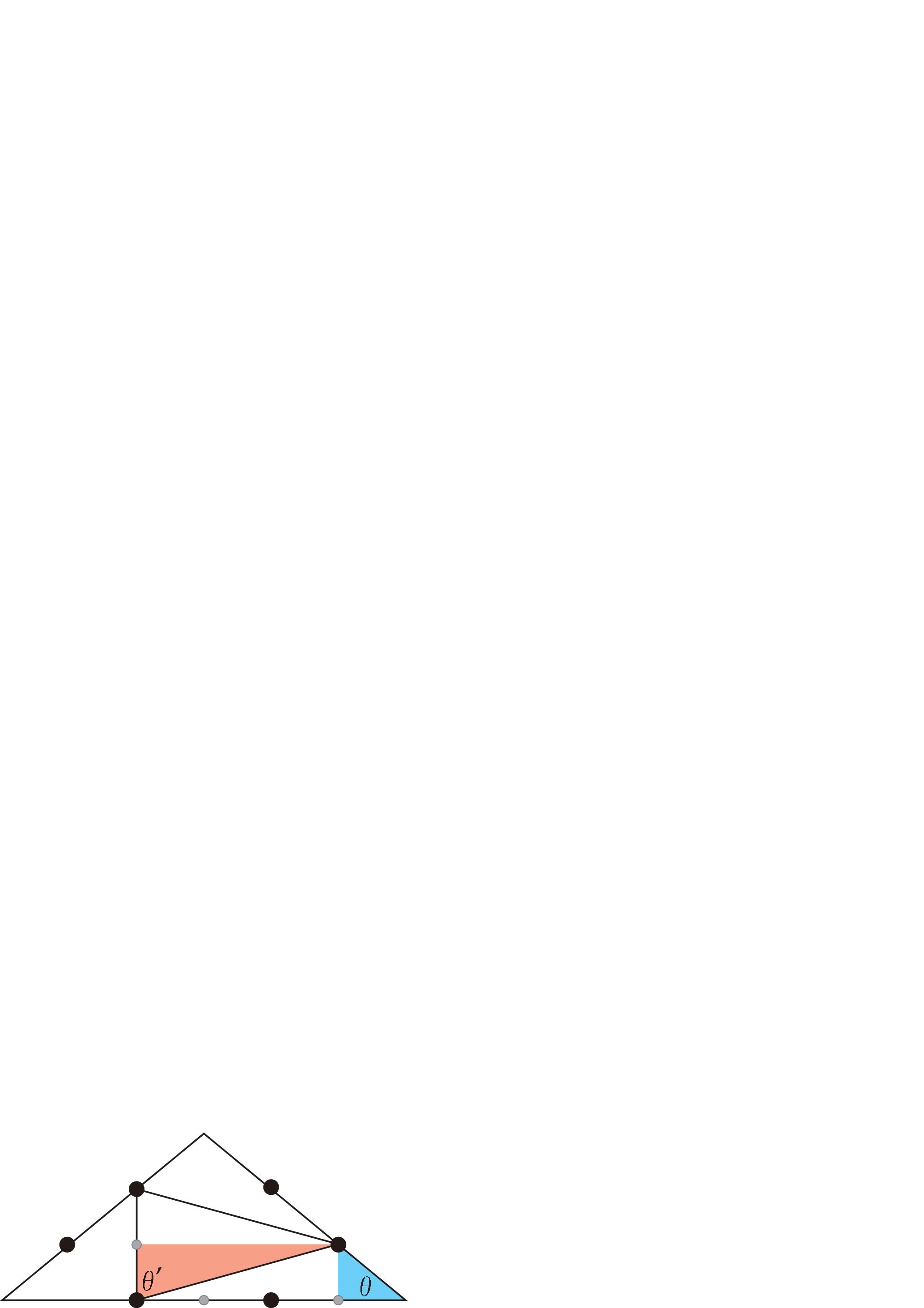}
\caption{$\tan\theta_-\cdot\tan\theta_0=3$}
\label{isosceles3}
\end{center}
\end{figure}

\bigskip
\noindent
{\bfseries Proof of Corollary {\rm \bf \ref{cor_right}.}} \ \ 
In the fundamental domain $\Omega$, a vertex $z$ of a right triangle $\triangle z01$ lies on the upper half hemi-circle $|z-1/2|=1/2$, which intersects any circle of Apollonius in at most two points, which are symmetric in the line $\{x=1/2\}$. The extremal value of $\alpha$ is given by a right isosceles triangle. 
{\hfill{\small{${\square}$}}\par\medskip}

\bigskip
\noindent
{\bfseries Proof of Corollary {\rm \bf \ref{cor_SAP}.}} \ \ 
In the fundamental domain $\Omega$, a non-regular triangle such that the ratio of the edge lengths is $1:1-d:1-2d$ corresponds to 
\[
P_\pm(d)=\left(\frac{1\pm(2d-3d^2)}2,\,\frac{\sqrt3}2\sqrt{(1-d)(1-d^2)(1-3d)}\right)
\quad \left(0<d<\frac13\right).
\]
Let $\Gamma_\pm$ be a curve $\Gamma_\pm=\{P_\pm(d)\}_{0<d<1/3}$. 
We show that each of $\Gamma_\pm$, say $\Gamma_+$, intersects any circle of Apollonius with foci $\rho$ and $\rho^{-1}$, $C_A$, in exactly one point. 

Firstly, since $\Gamma_+$ is an open curve joining $\rho$ and a point $(2/3,0)$ on the real axis, it must intersect $C_A$. 
Secondly, if we put $\Omega_\mp=\Omega\cup\{\R z\gtrless 1/2\}$ \footnote{The image of $\Omega_+$ (or $\Omega_-$) of $\varphi$ is the intersection of $\mathcal{D}$ and the upper half plane (or the lower half plane respectively).}, then $\Gamma_+\subset\Omega_-$, and hence $\Gamma_+\cap C_A=\Gamma_+\cap(C_A\cap\Omega_-)$. 
Let $P_+(d)=(x(d),y(d))$, then, as $x(d)$ is an increasing function and $y(d)$ a decreasing function of $d$, we have $dy/dx<0$ on $\Gamma_+$, whereas 
$C_A\cap\Omega_-$ can be expressed as a graph of an increasing function. Therefore $\Gamma_+$ intersects $C_A$ in at most one point. 

We remark that the statement can also be proved by computation showing that $|\varphi(P_\pm(d))|$ is a monotonely increasing function of $d$ with $|\varphi(P_\pm(0))|=0$ and $\lim_{d\to1/3}|\varphi(P_\pm(d))|=1$. 
{\hfill{\small{${\square}$}}\par\medskip}

\bigskip
\noindent
{\bfseries Proof of Lemma {\rm \bf \ref{lemma}.}} \ \ 
Suppose $\Delta_{ba}$ is expressed by a complex number $z=z_{ba}=x+yi$ $(x,y\in\mathbb{R})$ in the fundamental domain $\Omega$. 
Since $|z|^2=a^2$ and $|z-1|^2=b^2$, $x$ and $y$ satisfy
\begin{equation}
\begin{array}{rcl}
x&=&\displaystyle \frac12(a^2-b^2+1), \\[2mm]
y^2&=&\displaystyle \frac14\left(b^2-(a-1)^2\right)\left((a+1)^2-b^2\right).
\end{array}
\label{xy_ab}
\end{equation}
Let $w=f(z)$ and $\theta=\arg w$. Then 
\[
\tan\theta=\frac{\I\left(-\rho\,\frac{z-\rho}{z-\rho^{-1}}\right)}{\R\left(-\rho\,\frac{z-\rho}{z-\rho^{-1}}\right)}
=\sqrt3\,\frac{x^2+y^2-2x}{x^2+y^2+2x-2}.
\]
Substitution of \eqref{xy_ab} gives
\[
\tan\theta=\sqrt3\,\frac{b^2-1}{2a^2-b^2-1}.
\]
Since $\Delta_{ab}$ is a mirror image of $\Delta_{ba}$, we have $\varphi(z_{ab})=\overline{\varphi(z_{ba})}$, and since $\varphi(z)={\left(f(z)\right)}^3$, we have 
\[
\arg\varphi(z_{ab})=\arg\varphi(z_{ba})+6\theta \>\>\>\mbox{modulo }\>2\pi.
\]

On the other hand, $T_q$ acts on $\mathcal{D}$ as a rotation by $6\tau_q$, where $\tau_q=\arg\left(-1+(1-2q)\sqrt3\,i\right)$. 
Suppose 
\begin{equation}\label{tan_theta=tan_sigma}
-(1-2q)\sqrt3=\sqrt3\,\frac{b^2-1}{2a^2-b^2-1}. 
\end{equation}
It means $\tan\tau_q=\tan\theta$, which implies $\tau_q\equiv\theta$ modulo $\pi$ and hence $6\tau_q\equiv6\theta$ modulo $2\pi$, which implies $T_q(\varphi(z_{ab}))=\varphi(z_{ba})$, i.e., $T_q([\Delta_{ab}])=[\Delta_{ba}]$. 

The equation \eqref{tan_theta=tan_sigma} gives
\[
q=\frac{1-a^2}{b^2+1-2a^2}.
\]
The other two values follow from the above by Corollary \ref{cor_T_q_equal}. 
{\hfill{\small{${\square}$}}\par\medskip}

\bigskip
\noindent
{\bfseries Proof of Theorem {\rm \bf \ref{thm_rationality}.}} \ \ 
When $[\Delta]=[\Delta']$ the conclusion follows from Corollary \ref{cor_T_q_id}. 
When $[\Delta]\ne[\Delta']$ the conclusion follows from Proposition \ref{prop_isosceles} for isosceles triangles and from Corollaries \ref{cor_right}, \ref{cor_SAP} and Lemma \ref{lemma} for right and SAP triangles. 
{\hfill{\small{${\square}$}}\par\medskip}

\section{Compass and straightedge constructibility}
%
%
\begin{proposition}\label{constructibility}
Given two triangles $\Delta=\triangle ABC$ and $\Delta'=\triangle A'B'C'$. 
Whether $\Delta$ is equisectionally equivalent to $\Delta'$ or not can be determined using a straightedge and compass, and if the answer is affirmative, a real number $q$ such that $T_q([\Delta])=[\Delta']$ is compass-and-straightedge constructible. 
\end{proposition}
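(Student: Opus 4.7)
\medskip
The plan is two-fold: first, decide equisectional equivalence; second, in the affirmative case, construct a witness $q$.

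For deciding equivalence, I would invoke Theorem~\ref{main_theorem}(4). Placing the base $BC$ of $\Delta$ in the plane, construct the two points $A_{2}$ and $A_{3}$ in $\Pi_{A}$ by transferring the angles and side lengths of $\triangle ABC$ to its own base; both are classical compass-and-straightedge constructions. Next construct the circumcircle $\Gamma$ through $A,A_{2},A_{3}$, which is also classical. Construct $\widehat{A}'\in\Pi_{A}$ as the vertex over $BC$ of a triangle similar to $\Delta'$ (transfer the angles of $\Delta'$ to $B$ and $C$ in $\Pi_{A}$ and take the intersection of the two resulting rays). Theorem~\ref{main_theorem}, (1)$\Leftrightarrow$(4), then tells us $\Delta\sim\Delta'$ if and only if $\widehat{A}'$ lies on $\Gamma$, which is an incidence test.

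Now assume $\Delta\sim\Delta'$ with $[\Delta]\neq[\Delta']$ (otherwise $q=0$ works by Corollary~\ref{cor_T_q_id}(1)). Place $B=0$ and $C=1$ in $\mathbb{C}$, so that $A$ corresponds to $z\in\mathcal{H}$ and $\widehat{A}'$ to $z'\in\mathcal{H}$. The point $\rho=(1+\sqrt{3}\,i)/2$ is constructible, and compass-and-straightedge constructions realise all the field operations on constructible complex numbers, so the M\"obius map $f$ of~\eqref{def_NO} is constructible; I would build $Z=f(z)$ and $Z'=f(z')$ in $\mathcal{B}$. By Theorem~\ref{main_theorem}, $\Delta\sim\Delta'$ forces $|Z|=|Z'|$ (their cubes agree in modulus in $\mathcal{D}$), so a rotation about the origin of $\mathcal{B}$ sends $Z$ to $Z'$. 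Since $g(Z)=Z^{3}$ triples arguments, Theorem~\ref{thm_NO} shows that $T_{q}$ is induced by the rotation of $\mathcal{B}$ through $2\tau_{q}$, where $\tau_{q}=\arg(-1+(1-2q)\sqrt{3}\,i)$. Hence
\[
\angle Z'OZ\;\equiv\;2\tau_{q}\pmod{2\pi/3}.
\]
I would then bisect this angle (classical construction) to obtain $\tau_{q}$, construct $\tan\tau_{q}$ as a ratio of constructible legs, and recover
\[
q\,=\,\frac{1}{2}\left(1+\frac{\tan\tau_{q}}{\sqrt{3}}\right)
\]
from the identity $(2q-1)\sqrt{3}=\tan\tau_{q}$.

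The apparent obstacle is that the rotation angle in $\mathcal{D}$ itself is $6\tau_{q}$: recovering $\tau_{q}$ directly from $\varphi([\Delta])$ and $\varphi([\Delta'])$ would require dividing an angle by $6$, which subsumes angle trisection and therefore fails to be constructible in general. The crucial step that circumvents this is the lift through the Nakamura--Oguiso factorisation $\varphi=g\circ f$: on the intermediate disc $\mathcal{B}$, the rotation angle drops to $2\tau_{q}$, and a single angle bisection suffices. Granted this, everything else is standard incidence geometry and field arithmetic on constructible numbers.
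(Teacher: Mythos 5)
Your argument is correct and, at its mathematical core, coincides with the paper's: both proofs decide equivalence by a constructible incidence test drawn from Theorem~\ref{main_theorem}, and both extract $q$ by observing that the rotation angle $6\tau_q$ on $\mathcal{D}$ becomes $2\tau_q$ after lifting through $g(Z)=Z^3$ to $\mathcal{B}$, so that a single angle bisection followed by $q=\frac12\bigl(1+\tan\tau_q/\sqrt3\bigr)$ avoids any angle trisection. The implementations differ in two inessential but noteworthy ways. For the decision step you use condition (4) of Theorem~\ref{main_theorem} (the circle through $A,A_2,A_3$) where the paper uses condition (2) (the Apollonius circle with foci $D,\overline D$, whose center it constructs as the intersection of the perpendicular bisector of $BC$ with the tangent to $\Gamma(D,\overline D,A)$ at $A$); both are classical constructions. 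For the angle, you explicitly construct the M\"obius images $Z=f(z)$, $Z'=f(z')$ by field arithmetic on constructible complex numbers and measure $\arg Z'-\arg Z$ at the origin, whereas the paper never constructs $f(z)$: it uses the conformality of $f^{-1}$ to identify that same angle with the signed angle at $D$ between the two circles $\Gamma(D,A,\overline D)$ and $\Gamma(D,\widehat A',\overline D)$ of the elliptic pencil, which can be read off directly from the original figure as the angle between tangent lines at $D$. The paper's version is the more synthetic construction; yours trades that for routine (but entirely legitimate) arithmetic constructions. One small point you leave implicit: the congruence $\tau_q\equiv\frac12(\arg Z'-\arg Z)\pmod{\pi/3}$ leaves three candidate values of $\tan\tau_q$; since $6(\tau+\pi/3)\equiv 6\tau\pmod{2\pi}$, all three yield valid values of $q$ (they are exactly the three values of Corollary~\ref{cor_T_q_equal}), so any constructible choice works, just as the paper's choice of $n$ in its step (7) is merely a normalization.
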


\begin{proof}
One can construct the following with a compass and straightedge in the following order: 
\begin{enumerate}
\item Two points $D$ and $\overline{D}$ ($D\in\Pi_A$) such that both $\triangle DBC$ and $\overline{D}BC$ are regular triangles. 
\item  A vertex $\widehat A'$ in $\Pi_A$ such that $[\triangle \widehat A'BC]=[\triangle A'B'C']$. 
\item  An oriented circle $\Gamma(D, A, \overline{D})$ and another oriented circle $\Gamma(D, \widehat A', \overline{D})$ (see Definition \ref{def_Gamma}). 
\item  A circle of Apollonius $C_A$ with foci $D$ and $\overline{D}$ that passes through $A$, since the center is the intersection of a bisector of the edge $BC$ and a tangent line of $\Gamma(D,\overline{D}, A)$ at point $A$. 
\item  A decision whether $\Delta\sim\Delta'$ or not, since the answer is affirmative if and only if $\widehat A'\in C_A$.  
\end{enumerate}

\medskip
Assume $\Delta\sim\Delta'$ in what follows. 

\medskip
\begin{enumerate}\setcounter{enumi}{5}
\item The signed angle $\theta$ ($-2\pi/3<\theta<2\pi/3$) at point $D$ from the oriented circle $\Gamma(D, A, \overline{D})$ to $\Gamma(D, \widehat A', \overline{D})$. 
\item  At least one $n\in\mathbb{Z}$ such that 
\[
\frac\theta2+\frac{n\pi}3\in\left(\frac\pi2,\frac{3\pi}2\right)\quad\mbox{ modulo }2\pi. 
\]
\item  The value $q$ which is given by 
\begin{equation}\label{f_q_constructibility}
q=\frac12\left[1+\frac1{\sqrt3}\tan\left(\frac\theta2+\frac{n\pi}3\right)\right],
\end{equation}
where $n$ is given by the proceeding step. 
\end{enumerate}

\medskip
We explain the process (6),(7),(8). By working in the fundamental domain $\Omega$, we may assume that $B=0, C=1, A=z$, and $\widehat A'=z'$. By formulae \eqref{def_varphi} and \eqref{formula_T_q}, we want a real number $q$ such that 
\begin{equation}\label{f_q_z_z'}
6\arg\left(-1+(1-2q)\sqrt3\,i\,\right)
=3\left(\arg\left(\frac{z'-\rho}{z'-\rho^{-1}}\right)-\arg\left(\frac{z-\rho}{z-\rho^{-1}}\right)\right) \>\>\>\textrm{modulo}\>2\pi.
\end{equation}
Since the right hand side divided by $3$ is equal to the singed angle $\hat\theta$ at $0$ from the oriented line $\Gamma(0,(z-\rho)/(z-\rho^{-1}),\infty)$ to the oriented line $\Gamma(0,(z'-\rho)/(z'-\rho^{-1}),\infty)$, 
and a linear fractional transformation $f^{-1}$ maps $0, \,(z-\rho)/(z-\rho^{-1}), \>(z'-\rho)/(z'-\rho^{-1})$, and $\infty$ to $\rho, z, z'$, and $\rho^{-1}$ respectively, the signed angle $\hat\theta$ is equal to the signed angle $\theta$ at point $\rho$ from the oriented circle $\Gamma(\rho, z,\rho^{-1})$ to the oriented circle $\Gamma(\rho, z',\rho^{-1})$. 

If $q$ is given by \eqref{f_q_constructibility}, then 
\[
-1+(1-2q)\sqrt3\,i=-1-i\tan\left(\frac\theta2+\frac{n\pi}3\right),
\]
and therefore, $6\arg\left(-1+(1-2q)\sqrt3\,i\,\right)\equiv3\theta\>$ (modulo $2\pi$), which means \eqref{f_q_z_z'}. 
\end{proof}

\bigskip

Jun O'Hara

Department of Mathematics and Informatics,Faculty of Science, 

Chiba University

1-33 Yayoi-cho, Inage, Chiba, 263-8522, JAPAN.  

E-mail: ohara@math.s.chiba-u.ac.jp

\end{document}